\def\myfig#1#2#3{%
    \subfloat[#2 \label{subfig:#3}]{%
        \includegraphics[width=0.45\textwidth]{#1}%
    }%
}
\def\eps{\varepsilon}
\def\abs#1{{\left\lvert #1\right\rvert}}
\def\norm#1{{\left\lVert #1\right\rVert}}
\def\Qbar{{\overline{Q}}}
\def\Qfrak{{\overline{\mathfrak{Q}}}}
\def\Ufrak{{\overline{\mathfrak{U}}}}
\DeclareMathOperator{\cl}{cl}
\DeclareMathOperator{\proj}{proj}
\DeclareMathOperator{\dom}{dom}
\DeclareMathOperator{\epi}{epi}
\DeclareMathOperator*{\argmin}{arg\,min}
\newtheorem{thm}{Theorem}
\newtheorem{lemma}[thm]{Lemma}
\newtheorem{prop}[thm]{Proposition}
\newtheorem{cor}[thm]{Corollary}
\theoremstyle{definition}
\newtheorem{defi}{Definition}
\author{Shabbir \textsc{Ahmed}\\
	Filipe Goulart \textsc{Cabral}\\
	Bernardo \textsc{Freitas Paulo da Costa}}
\date{\today}
\title{Stochastic Lipschitz Dynamic Programming}
\begin{document}
\maketitle

\begin{abstract}
We propose a new algorithm for solving
multistage stochastic mixed integer linear programming (MILP) problems
with complete continuous recourse.
In a similar way to cutting plane methods,
we construct nonlinear Lipschitz cuts
to build lower approximations for the non-convex cost-to-go functions.
An example of such a class of cuts
are those derived using Augmented Lagrangian Duality for MILPs.
The family of Lipschitz cuts we use is MILP representable,
so that the introduction of these cuts does not change the class of the original
stochastic optimization problem.

We illustrate the application of this algorithm on two simple case studies,
comparing our approach with
the convex relaxation of the problems, for which we can apply SDDP,
and for a discretized approximation, applying SDDiP.
\end{abstract}

\section{Introduction}

Non-convex stochastic programming problems arise naturally in models
that consider binary or integer variables, since such variables allow 
the representation of a wide variety of constraints at the cost of inducing 
non-convex feasible sets.
Recent advances in commercial solvers have made possible and more robust 
the solution of several mixed integer linear programming problems, 
broadening the interest of the academic community in studying and 
developing algorithms for the mixed integer stochastic programming area.
Applications such as unit commitment~\cite{Tahanan2015}, \cite{Costley2017},
\cite{knueven2018mixed}, \cite{Ackooij2018}, optimal investment 
decisions~\cite{singh2009dantzig}, \cite{ConejoBook2016}, 
and power system operational planning~\cite{cerisola2012stochastic}, 
\cite{thome2013non}, \cite{Hjelmeland2019}
have driven the development 
of new algorithms for problems that do not fit into the convex 
optimization framework.

Before the development of the MIDAS~\cite{philpott2016midas}
and the SDDiP~\cite{SDDiP2016} algorithms,
the solution
of large-scale multistage stochastic programming problems with  
theoretical guarantees was restricted to convex problems, 
using algorithms such as Nested Cutting Plane~\cite{glassey1973},
Progressive Hedging~\cite{rockafellar1991} and SDDP~\cite{pereira1991multi}.
For a recent reference, see \cite{birge2011introduction} and \cite{shapiro2014lectures}. 
Most of those algorithms build convex underapproximations of the cost-to-go function
at each node of the scenario tree, or at each stage in the stagewise independent setting, to solve the stochastic convex program.
Even the SDDiP algorithm relies on convex underapproximations,
which are shown to be sufficient for convergence thanks to the
binary discretization of the state variables and the tightness property of
the Lagrangian cuts at binary states.
The MIDAS algorithms is based on a different idea, using
step functions to approximate monotone non-convex cost-to-go functions.

The aim of this paper is to propose a new algorithm
for mixed integer multistage stochastic programming problems,
which does not discretize the state variable
and uses non-linear cuts
to capture non-convexities in the future cost function.
Inspired by the exactness results from~\cite{ahmed2017ald},
we build non-convex underapproximations of the expected cost-to-go function,
whose basic pieces are \emph{augmented Lagrangian cuts}.
They can be calculated from augmented Lagrangian duality,
which~\cite{ahmed2017ald} have shown to be exact for mixed-integer linear problems
when the augmentation function is, for example, the $L^1$-norm.
If the original problem already had pure binary state variables,
then, as it was shown in~\cite{SDDiP2016},
Lagrangian cuts are already tight,
and we can use them in our algorithm by setting
the non-linear term of the augmented Lagrangian to zero.

As we will see, even a countable number of these cuts
cannot describe exactly the value function of a mixed-integer linear program,
even when that function is continuous.
However, the $L^1$ cuts have sufficient structure for our purposes,
in two very important ways:
first, they are Lipschitz functions, which still yield global estimates from local behaviour,
even if those are weaker than what's typical with convexity.
The Lipschitz estimates will be crucial for our convergence arguments,
and the resulting algorithm is therefore called
Stochastic Lipschitz Dynamic Programming (SLDP).

Second, it is possible to represent such $L^1$ cuts using binary variables
and a system of linear equalities and inequalities.
Therefore, we do not leave the class of mixed-integer linear problems
when incorporating $L^1$-augmented Lagrangian cuts in each node/stage of the stochastic problem.
Under some hypothesis for continuous recourse of each stage,
it is possible to prove that
the expected cost-to-go functions of stochastic mixed-integer linear problems are Lipschitz,
and therefore our algorithm can be applied directly.


We have organized this paper as follows: 
the next section motivates the SLDP algorithm
with a study of Lipschitz optimal value functions
and a decomposition algorithm for deterministic Lipschitz optimization.
Then, we present the SLDP algorithm for both the full-tree and sampled scenario cases,
and prove their convergence.
Finally, we illustrate our results with a case study.

\section{Lipschitz value functions}\label{sec:global_opt}


Our SLDP algorithm uses Lipschitz cuts to approximate
the cost-to-go function of a stochastic MILP
in a similar fashion to the nested cutting planes algorithm
for stochastic linear programs~\cite{ruszczynski2003stochastic}.
That is, we also compute lower approximations
that iteratively improve the cost-to-go approximation in a 
neighborhood of the optimal solution.

The purpose of this section is to motivate the use of Lipschitz cuts
for nonconvex functions, especially for optimal value functions
of mixed integer problems.
We start with the definition and basic properties of Lipschitz functions
as well as the convergence proof of an algorithm for Lipschitz optimization
that employs special Lipschitz cuts called \emph{reverse norm cuts}.
The idea of reverse norm cuts
also appears in Global Lipschitz Optimization (GLO),
see~\cite{mayne1984outer}, \cite{meewella1988algorithm} and more recently in~\cite{malherbe17a}.
However, our aim is not to develop a new algorithm for GLO
but to explain the reverse norm cut algorithm
and establish some results to extend it
for stochastic multistage MILP programs.

Then, we recall the definition of Augmented Lagrangian duality
and the exactness results in~\cite{ahmed2017ald}
and argue how they can be used to construct
\emph{augmented Lagrangian cuts} in place of the reverse norm cuts.
This provides a unified framework,
generalizing both nonlinear reverse norm cuts
and the linear Lagrangian cutting algorithms
in the continuous setting of~\cite{thome2013non}
or the binary setting of SDDiP~\cite{SDDiP2016}.

Finally, we show how this theory applies to MILP value functions.

\subsection{Lipschitz functions and reverse norm cuts}\label{subsec:lipschitz_cuts}

Let us recall the definition and some results for Lipschitz functions.
We say that $f: \mathbb{R}^d \rightarrow \mathbb{R}$ is a Lipschitz
function with constant $L > 0$ if for all $x,y \in \mathbb{R}^d$ we
have $|f(x) - f(y)| \leq L \| x - y \|$.
Note that the linear function 
$f(x) = a^\top x$ is Lipschitz 
with constant $L = \|a\|_*$,
where $\|\cdot\|_*$ indicates the dual norm.
Let $f_1$ and $f_2$ be Lipschitz functions with constants $L_1$ and $L_2$, respectively.
It can be shown that
\begin{itemize}
	\item the maximum and minimum of $f_1$ and $f_2$ are Lipschitz functions
	with constant $\max\{L_1,L_2\}$; and
	\item the sum of $f_1$ and $f_2$ is a Lipschitz function with constant~$L_1+L_2$.
\end{itemize}


Consider now an example of a non-convex Lipschitz function.
Let $f$ be the piecewise linear function defined on $[0,3]$ by
\[
f(x) = 
\begin{cases}
  x & \text{: $0 \leq x \leq 1$,} \\
  1 & \text{: $1 \leq x \leq 2$,} \\
3-x & \text{: $2 \leq x \leq 3$,}
\end{cases}
\]
see figure~\ref{fig:milp_challenge} for an illustration.
Note that $f$ can be written as the minimum of linear functions,
$f(x) = \min \{x, 1, 3-x\}$,
so~$f$ is a Lipschitz function with constant~$L = 1$.
\begin{figure}[!hbtp]
	\centering
	\includegraphics[width=0.6\textwidth]{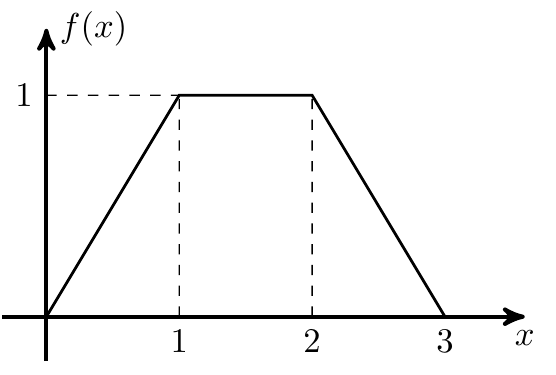}
	\caption{Piecewise linear Lipschitz function.}
	\label{fig:milp_challenge}
\end{figure}
It can also be seen from figure~\ref{fig:milp_challenge}
that the tightest lower convex approximation~$g$ for~$f$ over $[0,3]$
is the zero function.
This implies that there will always be a gap between
linear cuts and the original function~$f$,
since the best they could do is reproduce the Lagrangian relaxation $g$.
In other words, there is no way to use lower linear cuts 
to close the gap with~$f$,
which motivates the introduction of reverse norm cuts.

\begin{defi}
	A \emph{reverse norm cut} for a function $f$,
  centered at $\overline{x}$ and with parameter $\rho$,
	is the function
	\[
	C_{\rho,\overline{x}}(x) = f(\overline{x}) - \rho\cdot\| x - \overline{x}\|.
	\]
\end{defi}
Note that $C_{\rho,\overline{x}}(x)$ is a Lipschitz function with constant~$\rho$,
and if $C_1(x)$ and $C_2(x)$ are reverse norm cuts with parameters~$\rho_1$ and~$\rho_2$
then $\max \{C_1, C_2 \}(x)$ is a Lipschitz function with
constant~$\max\{\rho_1,\rho_2\}$.

Suppose that~$f$ is a Lipschitz function with constant~$L$.
If $\rho$ is greater than or equal to~$L$,
then all functions~$C_{\rho,\overline{x}}(x)$
are valid reverse norm cuts,
for any center point~$\overline{x}$.
Indeed,
\begin{align}\nonumber
C_{\rho,\overline x}(x)
& = f(x) - f(x) + f(\overline x) - \rho\cdot\norm{x - \overline x} \\ \nonumber
& \leq f(x) + L\cdot\norm{x - \overline x}  - \rho\norm{x - \overline x} \\ 
& \leq f(x), \label{eq:tent_ineq}
\end{align}
for all $x \in \mathbb{R}^d$.

A fundamental difference between reverse norm cuts and cutting planes 
is that, in general, one cannot guarantee that a 
piecewise linear Lipschitz function~$f$ can always
be represented as the maximum 
of a finite number of reverse norm cuts.
Indeed, as can be seen in figure~\ref{subfig:milp_cut_1},
one would need all reverse norm cuts centered at every $\overline{x} \in [1,2]$
to represent the non-convex function $f(x)$.
\begin{figure}[!hbtp]
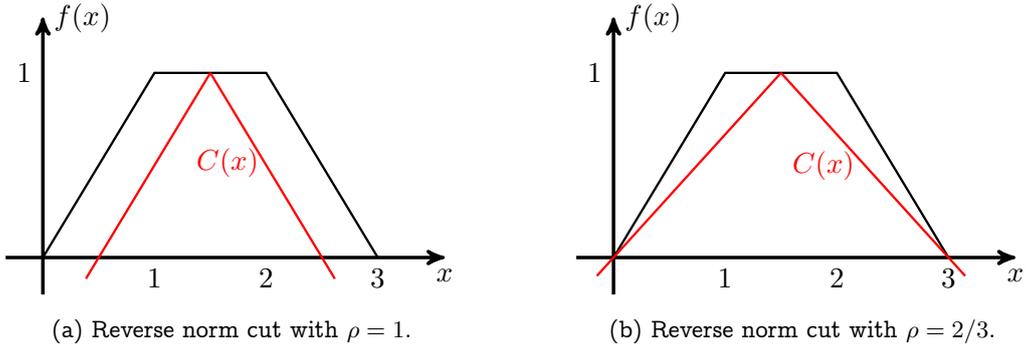

	\centering
	\myfig{figs/TiKZ/ALD_challenge5}{Reverse norm cut with $\rho = 1$.}{milp_cut_1}
	\hfill
	\myfig{figs/TiKZ/ALD_challenge3}{Reverse norm cut with $\rho = 2/3$.}{milp_cut_2}
	\caption{Example of reverse norm cuts for a Lipschitz function.}
\end{figure}

Sometimes, it is possible
to obtain a reverse norm cut centered at a point~$\overline{x}$ 
with Lipschitz constant $\rho$ less than $L$,
but such reverse norm cut may not be a lower approximation elsewhere
if translated in the domain.
In figure~\ref{subfig:milp_cut_2},
we show a tighter cut with $\rho = 2/3$ centered at $\overline{x} = 1.5$,
but this lower $\rho$ cannot be used for any other point in the domain of $f$.
If the constant $\rho$ is greater than $L$,  
the corresponding reverse norm cut is a lower approximations of~$f$ 
if centered at any point~$\overline{x}$ 
by the same deduction made in~\eqref{eq:tent_ineq}.

Finally, observe that if the function~$f$ is not Lipschitz
then reverse norm cuts may need arbitrarily
large parameters, depending on their center point.
Indeed, let $g(x)$ be the fractional-part function:
\[
  g(x) = x - \lfloor x \rfloor = \min_{y \in \mathbb{Z}, y \leq x} x - y,
\]
which is both piecewise linear and the optimal value function of a MILP,
see figures~\ref{subfig:milp_disc_1} and~\ref{subfig:milp_disc_2}.
\begin{figure}[!hbtp]
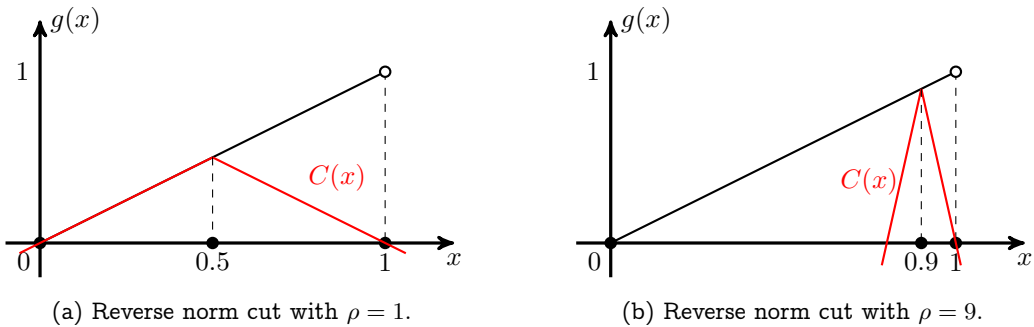

	\centering
	\myfig{figs/TiKZ/ALD_nonlipschitz2}{Reverse norm cut with $\rho = 1$.}{milp_disc_1}
	\hfill
	\myfig{figs/TiKZ/ALD_nonlipschitz4}{Reverse norm cut with $\rho = 9$.}{milp_disc_2}
	\caption{Example of reverse norm cuts for a non-Lipschitz function.}
	\label{fig:milp_disc}
\end{figure}%
As the point~$\overline{x}$ approaches~$1$,
the opening of the reverse norm cuts centered at~$\overline{x}$ goes to zero,
so their corresponding Lipschitz constant~$L$
must go to infinity.

\subsection{Optimization with reverse norm cuts}

To develop the intuition for the SLDP method
that will be presented in section~\ref{sec:lddp},
we introduce the reverse cut method for a deterministic problem
and its convergence proof.
We hope that this simpler setting
helps the reader understand some of the basic mechanisms of the proof
without the notational burden of the general stochastic multistage setting.

The deterministic optimization problem that will be investigated is 
\begin{equation}\label{eq:determ_main}
\begin{array}{rl}
\nu= \underset{x}{\min} & f(x) + g(x) \\
\text{s.t.} & x \in X,
\end{array}
\end{equation}
where $f(x)$ is a `simple' function and $g(x)$ is a `complex' one,
so while it is relatively cheap to optimize $f$,
we only assume that it is possible to evaluate~$g$ at given points.
For example, this can be the stage problem in the nested decomposition of a multistage problem,
where $f$ is the immediate cost and $g$ the average cost-to-go.

Assume that $g(x)$ is a Lipschitz function with constant~$L$
and~$X$ is a compact subset of~$\mathbb{R}^d$.
The reverse cut method, presented in algorithm~\ref{alg:cutting_tents},
similarly to decomposition methods,
iteratively approaches~$g(x)$
by reverse norm cuts centered at points
obtained along the iterations of the algorithm.
Those points are solutions of an
approximated optimization problem obtained
from~\eqref{eq:determ_main},
where $g(x)$ is replaced by the current 
Lipschitz approximation~$g^{k}(x)$.

\begin{algorithm}[!htbp]
	\caption{Reverse cut method}\label{alg:cutting_tents}
	\begin{flushleft}
		\textbf{Input:} lower bound~$g^1 \equiv M$,
		                cut constant~$\rho \geq L$, and
		                stopping tolerance~$\eps \geq 0$.\\
		\textbf{Output:} optimal solution~$x^k \in X$ and optimal value~$\nu^k$
						approximations, and Lipschitz lower estimate~$g^k$.
	\end{flushleft}
	Set initial iteration $k=1$ and go to step~1.
	\begin{algorithmic}[1]
		\State Solve the following problem:
		\begin{equation}\label{eq:determ_approx}
		\begin{array}{rl}
		\nu^k = \underset{x}{\min} & f(x) + g^{k}(x) \\
		\text{s.t.} & x \in X,
		\end{array}
		\end{equation}
		and take an optimal solution $x^k \in \argmin_{x \in X} f(x) + g^{k}(x)$.
    \State Evaluate $g(x^k)$. Stop if $g(x^k) - g^{k}(x^k) \leq \eps$.
		Otherwise, go to step 3.
		\State Update the Lipschitz approximation, adding a reverse norm cut at $x^k$:
		\[
		{g}^{k+1}(x) = 
		\max\left\{{g}^{k}(x),\ 
		g(x^k) - \rho \cdot \|x - x^k\| \right\}.
		\] 
		Set $k := k+1$, and go to step 1.
	\end{algorithmic}
\end{algorithm}

The Lipschitz approximation~$g^{k+1}(\cdot)$
is the maximum between the reverse norm cut
centered at~$x^k$ and the previous approximation~$g^{k}(\cdot)$:
\[
{g}^{k+1}(x) = 
\max\left\{{g}^{k}(x),\ 
g(x^k) - \rho \cdot \|x - x^k\| \right\}.
\]
By induction, all $g^k(\cdot)$ are Lipschitz functions
with constant~$\rho$,
since it is the maximum of reverse norm cuts with that constant.

With this observation in mind, we will prove the
convergence of algorithm~\ref{alg:cutting_tents}.
The compactness assumption of~$X$ is used only to ensure
the existence of a cluster point
for the sequence of trial points~$\{x^k\}_{k\in\mathbb{N}}$.

\begin{lemma}\label{lemma:cluster_converg}
	Suppose Algorithm~\ref{alg:cutting_tents} does not meet the 
	stopping criteria for problem~\eqref{eq:determ_main}.
	Let $x^* \in X$ be any cluster point
	of the sequence of trial points~$\{x^k\}_{k\in\mathbb{N}}$
	and let $\mathcal{K}$ be the indices of a subsequence
	that converges to~$x^*$.
	Then $\{g^k(x^k)\}_{k\in\mathcal{K}}$ converges to $g(x^*)$:
	\[
	\lim\limits_{k\in\mathcal{K}} g^k(x^k) = g(x^*).
	\]
\end{lemma}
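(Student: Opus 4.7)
The plan is to prove the lemma by a classical squeeze argument: I will show that $\limsup_{k\in\mathcal K} g^k(x^k) \le g(x^*)$ from the fact that each $g^k$ is a lower estimate of $g$, and that $\liminf_{k\in\mathcal K} g^k(x^k) \ge g(x^*)$ by exploiting the tightness of each freshly added reverse norm cut together with the Lipschitz spread of $\rho$.

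For the upper bound, I would first verify by induction on $k$ that $g^k(x) \le g(x)$ for every $x \in X$. The base case follows because $g^1 \equiv M$ is a lower bound for $g$ by hypothesis on the input; the induction step uses that $g$ is Lipschitz with constant $L \le \rho$, so the reverse norm cut centered at $x^k$ satisfies $g(x^k) - \rho\|x - x^k\| \le g(x)$ by the same deduction as in \eqref{eq:tent_ineq}, while $g^k(x) \le g(x)$ by hypothesis, and the maximum of two lower bounds is a lower bound. Continuity of $g$ (a consequence of it being Lipschitz) then gives $g(x^k) \to g(x^*)$ along $\mathcal K$, hence $\limsup_{k\in\mathcal K} g^k(x^k) \le \limsup_{k\in\mathcal K} g(x^k) = g(x^*)$.

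For the lower bound, the key observation is that $g^{k+1}$ contains the reverse norm cut centered at $x^k$, and by monotonicity $g^m \ge g^{k+1}$ for all $m \ge k+1$. Therefore, for any indices $k < m$ in $\mathcal K$ (with $m \ge k+1$),
\[
g^m(x^m) \;\ge\; g^{k+1}(x^m) \;\ge\; g(x^k) - \rho\,\|x^m - x^k\|.
\]
Fixing $k \in \mathcal K$ and letting $m \to \infty$ along $\mathcal K$, since $x^m \to x^*$ we get $\liminf_{m \in \mathcal K} g^m(x^m) \ge g(x^k) - \rho\,\|x^* - x^k\|$. Now letting $k \to \infty$ along $\mathcal K$, continuity of $g$ and $x^k \to x^*$ yield $\liminf_{m \in \mathcal K} g^m(x^m) \ge g(x^*)$.

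Combining the two bounds closes the squeeze and gives $\lim_{k\in\mathcal K} g^k(x^k) = g(x^*)$. The main subtlety, and the step I expect to be the most delicate, is the lower bound: it requires the iteration-to-iteration monotonicity of the approximations $g^k$ together with the exact equality $g^{k+1}(x^k) = g(x^k)$ at the new center, both of which are built into the update rule of the algorithm. The role of $\rho \ge L$ is precisely to guarantee that the cut evaluated at nearby points $x^m$ is still a valid lower estimate, so that the Lipschitz spread $\rho \|x^m - x^k\|$ vanishes as the subsequence clusters at $x^*$.
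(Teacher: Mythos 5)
Your proof is correct and rests on the same two ingredients as the paper's: validity of the cuts ($g^k \le g$, from $\rho \ge L$) for the upper bound, and tightness of the cut at each trial point propagated by monotonicity and the $\rho$-Lipschitz spread for the lower bound; the only difference is that you organize the lower bound as an iterated limit over pairs $k < m$ in $\mathcal{K}$, whereas the paper compares each $x^k$ with its immediate predecessor $x^j$ in the subsequence. The paper's variant has the minor advantage of producing the explicit estimate $g(x^k) - g^k(x^k) \le (L+\rho)\lVert x^k - x^j\rVert$, which is reused to prove finite termination in Corollary~\ref{cor:finite_deterministic}, but as a proof of the lemma itself both arguments are equally valid.
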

\begin{proof} 
	We will bound $|g(x^*) - g^k(x^k)|$ by the distance
	between the subsequence $\{x^k\}_{k \in \mathcal{K}}$
	and the cluster point~$x^*$.
	Using the triangular inequality and the Lipschitz definition, we get
	\begin{align}\nonumber
	\left|g(x^*) - g^k(x^k)\right| 
	& \leq  \left| g(x^*) - g(x^k)\right|
	+ \left| g(x^k) - g^k(x^k) \right| \\ \label{eq:cluster_ineq1}
	& \leq L \cdot \| x^* - x^k \| + \abs{g(x^k) - g^k(x^k)}.
	\end{align}

	Since $x^k$ converges to $x^*$ along $k \in \mathcal{K}$,
	we only need to prove that the lower bounds $g^k$
	will get arbitrarily close to $g$ at the trial points,
	before it is updated.
	By construction of the reverse cut method,
	$g^k(x^k)$ is less than or equal to~$g(x^k)$, so we only need upper bounds.
  Let $j$ be the index just before $k$ in the subsequence $\mathcal{K}$.
  Since $g^k$ is $\rho$-Lipschitz,
  \[
    g^k(x^k) \geq g^k(x^j) - \rho\cdot \| x^k - x^j \| .
  \]
  But also by construction, $g^k(x^j) = g(x^j)$,
  so subtracting the equation above from $g(x^k)$
  and applying once again the Lipschitz hypothesis on $g$ we obtain:
	\begin{align}\nonumber
	g(x^k) - g^k(x^k)
	& \leq g(x^k) - g(x^j) + \rho\cdot \| x^k - x^j \| \\ \label{eq:cluster_ineq2}
	& \leq L \cdot \| x^k - x^j \| + \rho \cdot \| x^k - x^j \|.
	\end{align}
	By replacing~\eqref{eq:cluster_ineq2} in~\eqref{eq:cluster_ineq1}
	and taking the limit over~$\mathcal{K}$, we conclude that the 
	sequence~$\{g^k(x^k)\}_{k\in\mathcal{K}}$ converges to~$g(x^*)$.	
\end{proof}

From the convergence result above,
we now separate the analysis of the cases where $\eps$ is strictly positive and zero
respectively in Corollary~\ref{cor:finite_deterministic} and Theorem~\ref{thm:cvg_rnm} below.

\begin{cor}\label{cor:finite_deterministic}
	For any stopping tolerance~$\eps>0$, Algorithm~\ref{alg:cutting_tents}
	stops in a finite number of iterations with an $\eps$-optimal solution
	for~\eqref{eq:determ_main}.
\end{cor}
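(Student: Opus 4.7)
The plan is to argue by contradiction for finiteness, and then to verify the quality of the returned solution directly from the stopping condition.

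First I would suppose the algorithm does not terminate, so that an infinite sequence of trial points $\{x^k\}_{k\in\mathbb{N}} \subset X$ is generated and $g(x^k) - g^k(x^k) > \eps$ for every $k$. Since $X$ is compact, this sequence admits a cluster point $x^* \in X$ along some subsequence indexed by $\mathcal{K}$. The Lipschitz continuity of $g$ implies $g(x^k) \to g(x^*)$ along $\mathcal{K}$, while Lemma~\ref{lemma:cluster_converg} gives $g^k(x^k) \to g(x^*)$ along the same subsequence. Subtracting yields $g(x^k) - g^k(x^k) \to 0$ along $\mathcal{K}$, which contradicts the assumption that this gap stays bounded below by $\eps > 0$. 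Hence the algorithm must terminate after finitely many iterations.

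Next I would show that the returned point $x^k$ is $\eps$-optimal for \eqref{eq:determ_main}. By construction, each $g^k$ is a pointwise lower approximation of $g$ on $X$, so
\[
\nu^k = \min_{x \in X} \bigl(f(x) + g^k(x)\bigr) \leq \min_{x \in X} \bigl(f(x) + g(x)\bigr) = \nu.
\]
On the other hand, $x^k$ is feasible for \eqref{eq:determ_main}, and the stopping test $g(x^k) - g^k(x^k) \leq \eps$ together with optimality of $x^k$ in \eqref{eq:determ_approx} gives
\[
f(x^k) + g(x^k) \leq f(x^k) + g^k(x^k) + \eps = \nu^k + \eps \leq \nu + \eps.
\]
Therefore $x^k$ attains the objective of \eqref{eq:determ_main} within $\eps$ of the optimum.

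The only delicate step is the contradiction argument, which hinges on Lemma~\ref{lemma:cluster_converg}. Everything else is bookkeeping: checking that $g^k \leq g$ (which is inherited inductively from the validity of reverse norm cuts with parameter $\rho \geq L$, as established in~\eqref{eq:tent_ineq}) and chaining the inequalities above. There is no real obstacle beyond ensuring that the cluster point exists, which is exactly where the compactness of $X$ is used.
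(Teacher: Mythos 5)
Your proposal is correct and follows essentially the same route as the paper: finiteness comes from compactness of $X$ together with the gap estimate underlying Lemma~\ref{lemma:cluster_converg} (you invoke the lemma's limit conclusion plus continuity of $g$, the paper cites inequality~\eqref{eq:cluster_ineq2} directly, but the idea is identical), and the $\eps$-optimality is the same chain $\nu^k \leq \nu \leq f(x^k)+g(x^k) \leq \nu^k + \eps$. No gaps.
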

\begin{proof}
	From inequality~\ref{eq:cluster_ineq2} in the proof of Lemma~\ref{lemma:cluster_converg}
	and compactness of $X$,
	Algorithm~\ref{alg:cutting_tents}
	will stop in a finite number of iterations if the stopping tolerance~$\eps$
	is strictly positive.
	Using the fact that $x^k \in X$ is a feasible solution
	to~\eqref{eq:determ_main} and optimal solution to~\eqref{eq:determ_approx}   such that~$g(x^k)-g^k(x^k) \leq \eps$,
	where that~$g^k$ is a lower estimate for~$g$, we have the following inequalities:
	\[
	\nu^k \leq \nu \leq f(x^k) + g(x^k) \leq f(x^k) + g^k(x^k) + 
	[g(x^k) - g^k(x^k)] \leq \nu^k +\eps,
	\]
	which means that $f(x^k) + g(x^k)$ is bounded between~$\nu$ and~$\nu+\eps$.
\end{proof}

\begin{thm} \label{thm:cvg_rnm}
	Consider the stopping tolerance~$\eps$ equal to zero.
	Then, Algorithm~\ref{alg:cutting_tents} stops with an optimal 
	solution in a finite number of iterations or it generates
	a sequence of optimal value approximations~$\{\nu^k\}_{k\in\mathbb{N}}$ 	
	that converges to the optimal value~$\nu$ of problem~\eqref{eq:determ_main} 
	and every cluster point~$x^*$ of the sequence~$\{x^k\}_{k\in\mathbb{N}}$
	is a minimizer of~\eqref{eq:determ_main}.
\end{thm}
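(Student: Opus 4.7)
The plan is to combine Lemma~\ref{lemma:cluster_converg} with the monotone behaviour of the lower estimates $g^k$ and the compactness of $X$, so that every cluster point of the iterates $\{x^k\}$ is revealed to be a minimizer of~\eqref{eq:determ_main}.

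First I would dispose of the finite-termination case. If the stopping test triggers at iteration~$k$ with $\eps = 0$, then $g(x^k) - g^k(x^k) \leq 0$; since every $g^k$ is a pointwise lower bound for $g$ (by induction, as the maximum of valid reverse norm cuts), we have $g(x^k) = g^k(x^k)$, and the inequality chain from the proof of Corollary~\ref{cor:finite_deterministic} collapses to $\nu^k = f(x^k) + g(x^k) = \nu$, so $x^k$ is optimal.

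Now assume the algorithm never stops. By construction $g^{k+1} \geq g^k$ pointwise, so the sequence $\{\nu^k\}$ is non-decreasing; combined with the lower-bound property $g^k \leq g$, which gives $\nu^k \leq \nu$ for every~$k$, this shows that $\{\nu^k\}$ converges to some $\nu^\infty \leq \nu$. By compactness of $X$, the sequence $\{x^k\}$ has at least one cluster point $x^* \in X$; let $\mathcal{K}$ denote an index subsequence with $x^k \to x^*$. Lemma~\ref{lemma:cluster_converg} supplies $g^k(x^k) \to g(x^*)$ along~$\mathcal{K}$, and continuity of $f$ (which I take as a standing regularity hypothesis, consistent with the MILP-stage setting motivating~\eqref{eq:determ_main}) gives $f(x^k) \to f(x^*)$. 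Hence
\[
\nu^\infty \;=\; \lim_{k\in\mathcal{K}} \nu^k \;=\; \lim_{k\in\mathcal{K}} \bigl(f(x^k) + g^k(x^k)\bigr) \;=\; f(x^*) + g(x^*) \;\geq\; \nu,
\]
the last inequality by feasibility of $x^*$. Combined with $\nu^\infty \leq \nu$, this yields $\nu^\infty = \nu = f(x^*) + g(x^*)$, so $x^*$ attains the minimum.

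The genuinely analytical effort is concentrated in Lemma~\ref{lemma:cluster_converg}; everything else is bookkeeping around a monotone bounded sequence. The one small subtlety worth flagging is that the theorem claims convergence of $\{\nu^k\}$ to $\nu$ for the \emph{whole} sequence, not only along $\mathcal{K}$: this is automatic because $\{\nu^k\}$ is monotone and bounded, so it already has a limit, and the subsequential identification of that limit as $\nu$ pins it down. If one wished to weaken the regularity on $f$, lower semicontinuity on $X$ would suffice to carry the same argument through by replacing the limit of $f(x^k)$ along~$\mathcal{K}$ with a liminf.
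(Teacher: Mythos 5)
Your proof is correct and follows essentially the same route as the paper's: both split into the finite-termination case (handled via the Corollary~\ref{cor:finite_deterministic} inequality chain) and the infinite case, where Lemma~\ref{lemma:cluster_converg} plus continuity of $f$ and feasibility of the cluster point $x^*$ sandwich $\nu$ between $f(x^*)+g(x^*)$ and $\lim_{k\in\mathcal{K}}\nu^k$, with monotonicity of $\{\nu^k\}$ upgrading subsequential convergence to full-sequence convergence. The only difference is cosmetic ordering --- you establish convergence of the whole monotone sequence first and then identify its limit along $\mathcal{K}$, whereas the paper does the reverse.
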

\begin{proof}
	Suppose that Algorithm~\ref{alg:cutting_tents} stops after a finite
	number of iterations. Using the same argument of Corollary~\ref{cor:finite_deterministic},
	we obtain that the last trial point~$x^k$ is the optimal solution 
	to~\eqref{eq:determ_main}.
	
	Now, suppose that Algorithm~\ref{alg:cutting_tents} never reaches
	the stopping condition, so we know from Lemma~\ref{lemma:cluster_converg} 
	that the sequence~$\{g^k(x^k)\}_{k\in\mathcal{K}}$ converges to~$g(x^*)$.
	Moreover, $x^k$ is a feasible solution to
	the main problem~\eqref{eq:determ_main}
	and optimal solution to the approximate
	problem~\eqref{eq:determ_approx}.
  Since $g^k$ is a lower estimate for $g$, we obtain the following relationships:
	\begin{equation}\label{eq:rel_sol}
	f(x^k) + g(x^k) \geq \nu \geq \nu_k = f(x^k) + g^k(x^k).
	\end{equation}
	Taking the limit over~$\mathcal{K}$ on both sides of~\eqref{eq:rel_sol},
  and by continuity of $f$ and $g$, we obtain:
	\begin{equation*}
	f(x^*) + g(x^*) \geq 
	\nu \geq \lim\limits_{k \in \mathcal{K}} \nu_k 
	=  f(x^*) + g(x^*),
	\end{equation*}
  which shows that all inequalities above are equalities,
  and therefore $x^*$ is an optimal solution
	to~\eqref{eq:determ_main}.	

	Going back to the full sequence, we recall that
	the sequence of objective functions  
	$\{f(x) + g^k(x)\}_{k\in\mathbb{N}}$
	is monotone nondecreasing, 
	so the sequence of optimal 
	values~$\{\nu_k\}_{k\in\mathbb{N}}$ is
	also monotone and nondecreasing, 
	which implies that~$\{\nu_k\}_{k\in\mathbb{N}}$ also
	converges to~$\nu$. 
\end{proof}

Observe that the proof of Lemma~\ref{lemma:cluster_converg},
Corollary~\ref{cor:finite_deterministic} and Theorem~\ref{thm:cvg_rnm} 
use only two properties of the reverse-norm cuts.
Besides their Lipschitz character,
they are exact at trial points, that is, $C_{\overline{x}}(\overline{x}) = g(\overline{x})$.
Therefore, any other way of producing \emph{uniformly Lipschitz tight cuts} for $g$
yields a convergent Lipschitz cut method for optimizing $f + g$.

So, just before presenting a class of MILPs with Lipschitz value functions,
we show how augmented Lagrangian duality can be used to produce tight Lipschitz cuts.

\subsection{Augmented Lagrangian cuts}
\label{sec:ald_cuts}

Recall the definition of augmented Lagrangian duality for
mixed-integer optimization problems.
Let
\begin{equation}
  \label{milp}
  \begin{array}{rl}
    g(b) = \min_{x \in X} & c^\top x \\
    \text{s.t.} & Ax = b
  \end{array}
\end{equation}
be a parameterized linear problem,
where $X$ describes the mixed-integer constraints.

\begin{defi}
Given an augmenting function $\psi$,
which is non-negative and satisfies $\psi(0) = 0$,
the \emph{augmented Lagrangian} for problem~\eqref{milp} is given by
\begin{equation}
  \label{al_milp}
   g^{AL}(b; \lambda, \rho) = \min_{x \in X} c^\top x - \lambda^\top(Ax - b) + \rho\cdot\psi(Ax - b).
\end{equation}
\end{defi}

Since any feasible solution to the original optimization problem~\eqref{milp}
remains feasible with the same objective value for the augmented Lagrangian~\eqref{al_milp},
we see that, for any $b$, $\lambda$ and $\rho \geq 0$,
\begin{equation}
  \label{eq:weak_duality}
  g^{AL}(b; \lambda, \rho) \leq g(b).
\end{equation}
Moreover, on the MILP setting, we have exact duality~\cite[Theorem 4, p. 381]{ahmed2017ald}
if the augmenting function $\psi$ is a \emph{norm}.
More precisely:
\begin{thm}
If the set $X$ is a rational polyhedron with integer constraints,
if the problem data $A$, $b_0$ and~$c$ are rational
and if for the given $b_0$ the problem is feasible with bounded value $g(b_0)$,
then there is a finite $\rho^*$ such that
\[
  \sup_{\lambda} g^{AL}(b_0; \lambda, \rho^*) = g(b_0).
\]
In addition, one can choose a \emph{finite} Lagrange multiplier $\lambda^*$
that attains the supremum.
\end{thm}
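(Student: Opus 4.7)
The plan is to reformulate the augmented Lagrangian dual in terms of the value function $g$ itself, and then reduce exactness to a pointwise lower bound on $g$ around $b_0$ that is a ``reverse-norm'' inequality of exactly the type~\eqref{eq:tent_ineq}, but tilted by a linear term.

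First, I would substitute $y = Ax - b_0$ in the definition of $g^{AL}(b_0;\lambda,\rho)$. Partitioning the minimization over $x \in X$ by first fixing $Ax = b_0 + y$ and then varying $y$, one obtains
\[
  g^{AL}(b_0;\lambda,\rho) \;=\; \inf_{y \in \R^m} \bigl\{\, g(b_0 + y) - \lambda^\top y + \rho\,\psi(y) \,\bigr\}.
\]
The weak duality bound~\eqref{eq:weak_duality} is then simply the fact that the right-hand side is at most its value at $y=0$, namely $g(b_0)$. Proving exactness therefore reduces to exhibiting a finite $\lambda^* \in \R^m$ and $\rho^* \geq 0$ such that the infimum is achieved at $y = 0$, i.e.,
\[
  g(b_0 + y) \;\geq\; g(b_0) + \lambda^{*\top} y - \rho^*\,\psi(y) \qquad \text{for all } y \in \R^m.
\]
For $y$ with $g(b_0+y) = +\infty$ (infeasible perturbations) this is trivial, so only the feasible perturbations matter.

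The core technical input is the structural theory of rational MILP value functions due to Blair--Jeroslow and Meyer: for rational data, $g$ is lower semicontinuous on $\R^m$ and its effective domain decomposes as a finite union of translates of a rational polyhedron, each translate being indexed by the integer part of an optimal $x$. On each such piece, LP duality on the continuous recourse provides a subgradient-type lower bound, and the finite union of these pieces yields a global \emph{reverse Lipschitz} estimate: there exist a finite vector $\lambda^*$ and a finite constant $K$ with
\[
  g(b_0 + y) \;\geq\; g(b_0) + \lambda^{*\top} y - K\,\|y\|
\]
for every $y$ in $\dom g - b_0$. Since $\psi$ is a norm on $\R^m$, norm equivalence gives a constant $c > 0$ with $\psi(y) \geq c\|y\|$; choosing $\rho^* := K/c$ then delivers the desired pointwise inequality, proves $\sup_\lambda g^{AL}(b_0;\lambda,\rho^*) = g(b_0)$, and shows that the supremum is attained at the finite multiplier $\lambda^*$.

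The main obstacle is the reverse-Lipschitz estimate for $g$. Unlike LP value functions, the MILP value function $g$ can jump \emph{upward} at integer thresholds and is in general not Lipschitz; what must be established is that its \emph{downward} variations from $b_0$ are controlled linearly in $\|y\|$ by a single constant $K$. This requires a careful use of the Blair--Jeroslow decomposition of $\epi(g)$, combined with the observation that the recession directions of every piece coincide with those of the LP relaxation, so that no sequence of perturbations can produce an unboundedly steep drop in $g$. Once this estimate is secured, selecting $\lambda^*$, comparing the two norms, and concluding the proof are mechanical.
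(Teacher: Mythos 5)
The paper itself does not prove this theorem: it is imported verbatim from \cite[Theorem 4]{ahmed2017ald}, so there is no internal proof to compare against. That said, your reduction is correct and is the standard primal characterization of the augmented Lagrangian dual: rewriting $g^{AL}(b_0;\lambda,\rho)=\inf_{y}\{g(b_0+y)-\lambda^\top y+\rho\,\psi(y)\}$, observing that weak duality is evaluation at $y=0$, and noting that exactness (with the supremum attained at a finite $\lambda^*$) is equivalent to the tilted reverse-norm inequality $g(b_0+y)\geq g(b_0)+\lambda^{*\top}y-\rho^*\psi(y)$ for all $y$. Up to that point the argument is sound, and the final norm-equivalence step is also fine.

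The gap is in your justification of that inequality. The effective domain of $g$ is in general a \emph{countable}, not finite, union of translates of the polyhedron coming from the continuous variables --- one translate per integer assignment --- so the step ``each piece gives a polyhedral lower bound, and the finite union yields a uniform $K$'' does not close: an infimum over infinitely many piecewise bounds can degrade without limit. The paper's own fractional-part example, $g(x)=x-\lfloor x\rfloor$ (Figure~\ref{fig:milp_disc}), shows the danger concretely: taking $b_0=1-\eps$ forces $K\geq 1/(2\eps)$ in your estimate, so no constant works uniformly in $b_0$, and the finiteness of $K$ for the \emph{given} $b_0$ must come from the rationality of $A$ and $b_0$ (which makes the set of translates a subset of a discrete lattice $\tfrac{1}{q}\mathbb{Z}^m$ and bounds $b_0$ away from the boundaries of the pieces not containing it). Your sketch never actually uses the rationality of $b_0$, and your appeal to recession cones addresses steepness at infinity rather than the real obstruction, which is pieces whose low values occur arbitrarily close to $b_0$. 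Establishing the bounded-$K$ estimate for fixed rational data --- via the Blair--Jeroslow structure of rational MILP value functions or Chv\'atal-type proximity bounds --- is precisely the hard content of the cited theorem; as written, your proposal asserts it rather than proves it. A smaller omission: you should note that finiteness of $g(b_0)$ already forces $g>-\infty$ everywhere on $\dom(g)$, because the recession cone of $\{x\in X: Ax=b\}$ does not depend on $b$; otherwise the infimum over $y$ would be $-\infty$ for every finite $(\lambda,\rho)$ and the reduction itself would break.
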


This motivates the introduction of \emph{augmented Lagrangian cuts}
using norms as the augmentation function.
Indeed, expanding the definition of $g^{AL}$
in the weak duality equation~\eqref{eq:weak_duality},
we get for any $b$:
\begin{align*}
  g(b) & \geq g^{AL}(b; \lambda, \rho) \\
       &   =  \min_{x \in X} \quad c^\top x - \lambda^\top(Ax - b) + \rho\norm{Ax - b} \\
       &   =  \min_{x \in X} \quad c^\top x - \lambda^\top(Ax - b_0 + b_0 - b) + \rho\norm{Ax - b_0 + b_0 - b}.
\end{align*}
By the triangular inequality,
\[
  \norm{Ax - b} \geq \norm{Ax - b_0} - \norm{b - b_0},
\]
so that
\begin{align*}
  g(b) & \geq \min_{x \in X} \quad c^\top x - \lambda^\top(Ax - b_0) + \lambda^\top(b_0 - b) + \rho\norm{Ax - b_0} - \rho\norm{b_0 - b} \\
       &   =  g^{AL}(b_0; \lambda, \rho) + \lambda^\top(b_0 - b) - \rho\norm{b_0 - b}.
\end{align*}
Therefore, calculating the augmented Lagrangian at $b_0$ provides a lower Lipschitz estimate for $g(b)$:
\begin{defi}
If $g$ is the optimal value function for a MILP, and given $b_0$, $\lambda$ and $\rho > 0$,
the \emph{augmented Lagrangian cut} centered at $b_0$ is the function
\[
  ALC_{b_0,\lambda,\rho}(b) := g^{AL}(b_0; \lambda, \rho) + \lambda^\top(b_0 - b) - \rho\norm{b_0 - b}.
\]
\end{defi}

Moreover, the exactness result above shows that
there exists a sufficiently large $\rho$ and appropriate $\lambda$
for which $g^{AL}(b_0; \lambda, \rho) = g(b_0)$, so the cut is \emph{tight} at $b_0$.
However, this proof stills leaves open the question of whether the family of such cuts
is \emph{uniformly} Lipschitz,
since as seen in the fractional-part example in figure~\ref{fig:milp_disc}
one might need arbitrarily large $\rho$ near discontinuities of the value function.

The Lipschitz setting that we assume for the value function
allows us to bypass this difficulty:
choosing $\rho = Lip(g)$ and $\lambda = 0$ produces a valid and tight cut,
so this provides an absolute upper bound on the needed $\rho$ for exact augmented Lagrangian duality.
This is why we proceed to characterize some MILPs with Lipschitz value functions,
before moving to the multistage case.

\subsection{MILPs with Lipschitz value functions}\label{subsec:MIP_optfunc}

As we have seen, the Lipschitz property of the value functions
was an important piece in the proof of convergence of the reverse-norm method
and also in the uniform bounds on $\rho$ for the augmented Lagrangian cuts.
It will also be fundamental in the analysis of the SLDP algorithm,
and therefore we present in this section
a sufficient condition that guarantees Lipschitz continuity
for the optimal value function of MILPs.

First, we show that the optimal value function for a Lipschitz objective
over a family of polyhedra is still Lipschitz.
Then, by enumerating the polyhedra over the realizations of integer variables,
we will arrive at the \emph{complete continuous recourse} (CCR) condition
that ensures the optimal value function of a MILP is Lipschitz.

\medskip

Consider the parameterized optimization problem
\begin{equation}\label{eq:lip_ovf}
  \begin{array}{rl}
    \nu(b) = \min_x & f(x) \\
    \text{s.t.} & (x,b) \in P.
  \end{array}
\end{equation}
for a Lipschitz function $f$ and a polyhedron $P$.
In order to analyze the function $\nu(b)$,
we have to understand the effect of the variations on the feasible set~$P(b)$
with respect to the parameter~$b$,
and compound this effect with the Lipschitz function~$f$.
For the first part, the Hoffman Lemma below provides the answer,
since it states that the symmetric difference between 
any two sets~$P(b)$ and~$P(b^*)$ is bounded by a ball centered
at the origin with a radius proportional to the norm~$\|b- b^*\|$.
This result resembles a version for sets of the Lipschitz 
continuity definition.

\begin{lemma}[Hoffman lemma~\cite{shapiro2014lectures}]
	Let $S(b)$ be a polyhedron parameterized by the right-hand side vector
	$b \in \mathbb{R}^m$ of a given linear system,  that is, 
	$S(b) = \{x \in \mathbb{R}^d \mid A x \leq b\}$.
	Let $b^* \in \mathbb{R}^m$ be a vector such that $S(b^*)$ is nonempty.
	Then, there exists $r > 0$ depending only on~$A$ such that
	\[
	S(b) \subseteq S(b^*) + r\cdot \|b - b^*\|\cdot \Delta, 
	\]
	where~$\Delta$ is the unit ball, i.e., 
	$\Delta = \{\eps \in \mathbb{R}^d \mid \|\eps \| \leq 1 \}$.
\end{lemma}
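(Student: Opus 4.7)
The plan is to prove Hoffman's lemma by a constructive projection argument: for any $x\in S(b)$, I exhibit a point $y^{*}\in S(b^{*})$ at distance at most a constant (depending only on $A$) times $\norm{b-b^{*}}$. If $S(b)$ is empty the inclusion is trivial, so I assume otherwise and fix an arbitrary $x\in S(b)$.

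First, I take $y^{*}$ to be the Euclidean projection of $x$ onto the nonempty closed convex set $S(b^{*})$, and I write the KKT conditions for the convex projection problem $\min\bigl\{\tfrac{1}{2}\norm{y-x}^{2}\colon Ay\leq b^{*}\bigr\}$. These yield nonnegative multipliers $\mu$ supported on an index set $I$ of constraints active at $y^{*}$, together with the stationarity relation $x-y^{*}=A_{I}^{\top}\mu_{I}$. Applying Carathéodory's theorem to the cone generated by the active rows $\{A_{i}^{\top}\}_{i\in I}$, I may further assume that these rows are linearly independent, so $|I|\leq d$ and $A_{I}A_{I}^{\top}$ is invertible.

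Next, I extract the quantitative bound. Substituting stationarity into the active equalities $A_{I}y^{*}=b_{I}^{*}$ yields $A_{I}x-b_{I}^{*}=A_{I}A_{I}^{\top}\mu_{I}$, hence
\[
\norm{x-y^{*}}^{2}=\mu_{I}^{\top}A_{I}A_{I}^{\top}\mu_{I}=\mu_{I}^{\top}(A_{I}x-b_{I}^{*})\leq \norm{\mu_{I}}_{1}\cdot\norm{b-b^{*}}_{\infty},
\]
where the inequality uses $\mu_{I}\geq 0$ together with $A_{I}x\leq b_{I}$ (from $x\in S(b)$), which gives $A_{I}x-b_{I}^{*}\leq b_{I}-b_{I}^{*}$ componentwise. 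Inverting $A_{I}A_{I}^{\top}$ bounds $\norm{\mu_{I}}_{1}\leq c_{I}\norm{x-y^{*}}$ for a constant $c_{I}$ depending only on $A_{I}$, and canceling one factor of $\norm{x-y^{*}}$ gives $\norm{x-y^{*}}\leq r_{I}\norm{b-b^{*}}$.

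Taking $r$ to be the maximum of $r_{I}$ over the finite family of index sets $I\subseteq\{1,\dots,m\}$ with $|I|\leq d$ and linearly independent rows yields a constant depending only on $A$; letting $x\in S(b)$ vary then gives the claimed inclusion. The main subtlety is the Carathéodory reduction: without passing to a linearly independent active set, $A_{I}A_{I}^{\top}$ need not be invertible, and one loses the uniform control on the multipliers that drives the whole estimate.
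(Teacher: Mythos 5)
The paper does not prove this lemma at all: it is stated with a citation to the literature (Shapiro et al.), so there is no in-paper argument to compare yours against. Judged on its own, your proof is correct and is essentially the classical projection-based proof of Hoffman's error bound. The chain of identities checks out: with $y^{*}$ the projection of $x$ onto $S(b^{*})$ and $x-y^{*}=A_{I}^{\top}\mu_{I}$ from the KKT conditions, you get $\norm{x-y^{*}}^{2}=\mu_{I}^{\top}A_{I}(x-y^{*})=\mu_{I}^{\top}(A_{I}x-b_{I}^{*})\leq\mu_{I}^{\top}(b_{I}-b_{I}^{*})$, and the conic Carath\'eodory reduction is exactly the right device to make $A_{I}A_{I}^{\top}$ invertible and hence to bound $\norm{\mu_{I}}_{1}$ by a constant depending only on the finitely many linearly independent row subsets of $A$. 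Two small points you glossed over but that are harmless: the reduced index set produced by Carath\'eodory is still a subset of the active set, so the equalities $A_{I}y^{*}=b_{I}^{*}$ used in the substitution remain valid (worth saying explicitly); and the cancellation of one factor of $\norm{x-y^{*}}$ requires $x\neq y^{*}$, the case $x=y^{*}$ being trivial. Also note you obtain the bound in the $\ell^{\infty}$ norm on $b$-space, while the statement uses an unspecified norm; equivalence of norms on $\mathbb{R}^{m}$ absorbs this into $r$, which still depends only on $A$ (and the chosen norms). This self-contained argument is a genuine addition relative to the paper, which treats the lemma as a black box.
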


With this, we can guarantee that the optimal value function
of a minimization problem of a Lipschitz function over a given polyhedron
is also Lipschitz in its essential domain.
The same holds for a maximization problem because if~$f$ is a
Lipschitz function then so is~$-f$,
then, switching from maximization to minimization,
Theorem~\ref{thm:lips_cost-to-go} allows us to conclude the same result
about the corresponding optimal value function~$-\nu(\cdot)$.

\begin{thm}[Lipschitz cost-to-go functions]\label{thm:lips_cost-to-go}
	Let $\nu(\cdot)$ be the optimal value function defined by
	\begin{equation}\label{eq:opt_lipschitz}	 
	\begin{array}{rl}
	\nu(b) = 
	         \min & f(x) \\
	\textrm{s.t.} & (x,b) \in P, 
	\end{array}
	\end{equation}	 
	where the set $P$ is a polyhedron in $\mathbb{R}^{d+m}$,
	the function~$f(\cdot)$ is Lipschitz continuous with constant~$L$,
	and we assume that there is one value of $b$ such that $\nu(b)$ is finite.
	Then, the essential domain of~$\nu(\cdot)$, $\dom(\nu)$, 
	is a polyhedron, and the function~$\nu(\cdot)$
	restricted to~$\dom(\nu)$ is Lipschitz continuous with
	constant~$L \cdot \widetilde{r}$,
	where~$\widetilde{r}$ is a constant that depends only on~$P$.
\end{thm}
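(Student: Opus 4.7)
The plan is to reduce the analysis of $\nu(b)$ to the variations of the parameterized feasible set $P(b) := \{x : (x,b) \in P\}$ as a function of $b$, using Hoffman's lemma to control these variations and the Lipschitz hypothesis on $f$ to translate that control into an estimate on $\nu$.

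First, I would write $P$ in inequality form, $P = \{(x,b) : Ax + Bb \leq c\}$, so that $P(b) = \{x : Ax \leq c - Bb\}$ fits the hypothesis of Hoffman's lemma with right-hand side $c - Bb$. The essential domain $\dom(\nu)$ coincides with the projection $\proj_b(P) = \{b : P(b) \neq \emptyset\}$, which is a polyhedron by Fourier--Motzkin elimination. I would then argue that under the assumption that $\nu(b^*)$ is finite for some $b^*$, no feasible $b$ can have $\nu(b) = -\infty$: combining Hoffman's lemma at reference $b^*$ with the Lipschitz property of $f$ yields $\nu(b) \geq \nu(b^*) - L r \norm{B}\,\norm{b - b^*}$, so $\dom(\nu) = \proj_b(P)$ really is polyhedral.

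Second, fix $b_1, b_2 \in \dom(\nu)$, so that both $P(b_1)$ and $P(b_2)$ are nonempty. Hoffman's lemma produces a constant $r > 0$, depending only on $A$, with
\[
  P(b_2) \subseteq P(b_1) + r\norm{B(b_1-b_2)}\,\Delta \subseteq P(b_1) + r\norm{B}\,\norm{b_1-b_2}\,\Delta,
\]
where $\norm{B}$ denotes the operator norm of $B$. For each $\eps > 0$, take an $\eps$-optimal point $x_2^\eps \in P(b_2)$ with $f(x_2^\eps) \leq \nu(b_2) + \eps$; the inclusion furnishes $x_1^\eps \in P(b_1)$ with $\norm{x_1^\eps - x_2^\eps} \leq r \norm{B}\,\norm{b_1 - b_2}$. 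Feasibility and Lipschitz continuity of $f$ then give
\[
  \nu(b_1) \leq f(x_1^\eps) \leq f(x_2^\eps) + L \norm{x_1^\eps - x_2^\eps} \leq \nu(b_2) + \eps + L r \norm{B}\, \norm{b_1 - b_2}.
\]
Letting $\eps \to 0$ and swapping the roles of $b_1$ and $b_2$ yields $\abs{\nu(b_1) - \nu(b_2)} \leq L \widetilde r \norm{b_1 - b_2}$ with $\widetilde r := r \norm{B}$, which depends only on the matrices $A, B$ describing $P$.

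The main technical obstacle is handling the possibility that the infimum defining $\nu(b)$ is not attained, since Lipschitz objectives need not achieve their infimum over an unbounded polyhedron; this is the reason for working with $\eps$-optimal solutions and passing to the limit, rather than directly with minimizers. A secondary subtlety is the identification $\dom(\nu) = \proj_b(P)$, i.e.\ confirming that finiteness at one point of the domain propagates to the entire projection; this, too, is handled by the Hoffman-plus-Lipschitz estimate above.
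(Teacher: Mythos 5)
Your proof is correct and follows essentially the same route as the paper's: identify $\dom(\nu)$ with $\proj_b(P)$, apply Hoffman's lemma to the right-hand side $c - Bb$ to move feasible points between $P(b_1)$ and $P(b_2)$ at cost $r\norm{B}\norm{b_1-b_2}$, and compose with the Lipschitz bound on $f$. The only cosmetic difference is that you work with $\eps$-optimal points while the paper takes the infimum over $x_1 \in P(b_1)$ after establishing the pointwise estimate; both correctly avoid assuming the infimum is attained.
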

\begin{proof}
	First, we prove that~$\dom(\nu)$ is a polyhedron.
	Recall that~$\dom(\nu)$ is the set of vectors
	$b \in \mathbb{R}^m$ for which the problem~\eqref{eq:opt_lipschitz}
	is feasible, that is,
	$\dom(\nu) = 
	\left\{b \in \mathbb{R}^m 
	\ \middle| \ \exists x \in \mathbb{R}^d; 
	(x,b) \in P, \ f(x) < +\infty
	\right\}$.
	Since~$f$ is continuous, $f$ does not assume~$+\infty$ anywhere, 
	so $\dom(\nu)$ is the projection of~$P$ over the component~$b$:
	\[\dom(\nu) = \proj_b (P).\]
	As the image of a polyhedron by a linear map is also a polyhedron,
	we conclude that $\dom(\nu)$ is a polyhedron in $\mathbb{R}^m$.
	
	Now, we need to prove that~$\nu(\cdot)$ is Lipschitz continuous over~$\dom(\nu)$.
	Denote by~$W x + T b \leq h$ the linear constraint that defines~$P$, that is, 
	$P = \{(x,b) \in \mathbb{R}^{d+m} \mid W x + T b \leq h \}$, 
	and let~$S(u)$ be the set given by the linear system~$W x \leq u$.	
	Now, let $b_1$ and $b_2$ be two points in the domain of $\nu$.
	Taking a feasible point $(x_1,b_1) \in P$ for the problem defined by $b_1$,
	and applying the Hoffman Lemma for~$u := h - T b_2$,
	we get that there is a feasible point $(x_2,b_2) \in P$ such that
	\[
	  \norm{x_2 - x_1} \leq r \norm{(h - Tb_1) - (h - Tb_2)} \leq r \norm{T} \cdot \norm{b_2 - b_1}.
	\]
	Therefore, by the Lipschitz hypothesis on $f$,
	\[
	  \nu(b_2) \leq f(x_2) \leq f(x_1) + L r \norm{T} \cdot \norm{b_2 - b_1}.
	\]
	Taking the infimum over $x_1 \in P(b_1)$, we see that
	$\nu(b_2) \leq \nu(b_1) + L r \norm{T} \cdot \norm{b_2 - b_1}$.
	If $\nu(b_2)$ is not $-\infty$, this shows that $\nu(b_1) > -\infty$ as well,
	so $\nu$ never assumes the value $-\infty$.

	Finally, by symmetry, we obtain $\abs{\nu(b_2) - \nu(b_1)} \leq L r \norm{T} \cdot \norm{b_2 - b_1}$,
	which is the Lipschitz condition for $\nu$.
\end{proof}

To handle the Stochastic MILP case, it would be convenient if
the minimum of Lipschitz functions over a union of polyhedra
was Lipschitz as well.
Indeed, one could split the optimization variable $x = (y,z)$
over the integer~$z$ and continuous variables~$y$
and obtain
\[
  \nu(b) = \min_{z} \min_{y \in P(b,z)} f(y, z).
\]
Since each function $\nu_z(b) = \min_{y \in P(b,z)} f(y, z)$
is Lipschitz continuous by Theorem~\ref{thm:lips_cost-to-go} above,
we'd be done.
However, this is not true in general,
because the domains of each $\nu_z$ may be different.

Indeed, we illustrate in figures~\ref{subfig:milp_const_1}
and~\ref{subfig:milp_const_2} an example of a discontinuous  
optimal value function induced by the problem of minimizing a 
linear objective function over the union of two polyhedra. 
In this particular example, the feasible set is the intersection 
between the blue dashed line and the union of both 
vertical and horizontal rectangles, while the objective function 
is a linear function that decreases as the solution candidate moves to the left.
We show in figures~\ref{subfig:milp_const_1} and~\ref{subfig:milp_const_2} 
the optimal solution of this problem for two 
different right-hand side parameters, which control the height of the dashed line.
As that parameter changes, the dashed line moves up or down,
and the optimal solution changes abruptly as soon as a 
point in the horizontal rectangle becomes feasible, 
as occurs in figure~\ref{subfig:milp_const_2}.
This shows that the optimal value function is discontinuous, so 
it cannot be Lipschitz continuous.

\begin{figure}[!hbtp]
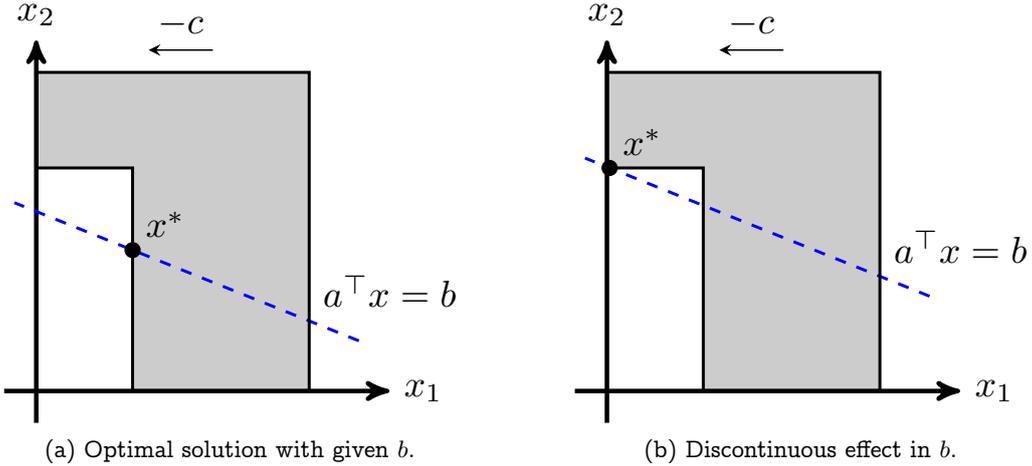

	\centering
	\myfig{figs/TiKZ/discontinous_ex1}{Optimal solution with given $b$.}{milp_const_1}
	\hfill
	\myfig{figs/TiKZ/discontinous_ex2}{Discontinuous effect in $b$.}{milp_const_2} 
	\caption{Minimum over union of polyhedra can be discontinous.}
\end{figure}%

In order to guarantee the Lipschitz condition for the optimal value function, 
we need to assume an additional hypothesis about the union of polyhedra that 
defines the feasible set of the optimization problem with Lipschitz objective.
Our typical optimization problem is
\begin{equation}\label{eq:opt_polyunion}
\begin{array}{rl}
\nu(b) = 
\underset{y}{\min} & f(y) \\
	 \textrm{s.t.} & (y,b) \in \bigcup_{i \in I} P_i,
\end{array}
\end{equation}	 
where $I$ is a finite index set, and $P_i$ is a polyhedron for each~$i \in I$. 
One sufficient condition for the Lipschitz continuity of~$\nu(b)$ 
is to assume that~$f$ 
is a Lipschitz continuous function in~$\proj_y(\bigcup_{i \in I} P_i)$
and that~$\proj_b (P_i)$ equals $\mathbb{R}^m$ for each $i \in I$.
This is called the \emph{Complete Continuous Recourse} (CCR) condition,
compare~\cite[Definition 1]{SDDiP2016}.
Indeed, under the CCR assumption, each optimal value 
function~$\nu_i(b)$ defined by the optimization problem
\begin{equation}\label{eq:opt_polypart}
\begin{array}{rl}
\nu_i(b) = 
\underset{y}{\min} & f(y) \\
	 \textrm{s.t.} & (y,b) \in P_i
\end{array}
\end{equation}	 
is Lipschitz continuous in $\mathbb{R}^m$
(Theorem~\ref{thm:lips_cost-to-go}) and since~$\nu(b)$ equals
$\min_{i \in I} \nu_i(b)$ we conclude that~$\nu(b)$ is also  
Lipschitz continuous in~$\mathbb{R}^m$.

\section{The Stochastic Lipschitz Dynamic Programming algorithm}\label{sec:lddp}
In this section, we present the Stochastic Lipschitz Dynamic Programming (SLDP)
algorithm for the multistage case in two different approaches:
the full scenario and the sampled scenario case.
In the full scenario approach, all the nodes are visited in the forward
and backward steps, and Lipschitz cuts are added for every
expected cost-to-go function.
In contrast, in the sampled scenario approach,
just the sampled scenarios are visited in the forward and backward steps,
and Lipschitz cuts are added only for the expected cost-to-go functions
of the sampled nodes.
We prove convergence to an optimal policy for the full scenario case, 
and we introduce an additional procedure in the sampled scenario case
to ensure convergence towards an~$\eps$-optimal policy.

\subsection{Multistage setting and Lipschitz continuity of cost-to-go functions}

For dealing with the stochastic multistage setting,
we fix some notation for describing the scenario tree.
Let $\mathcal{N}$ be the set of nodes,
where $1$ is the root node, and
$a:\mathcal{N}\backslash\{1\}\rightarrow\mathcal{N}$ is the ancestor function
that associates each node~$n$ except the root
to its ancestor node~$a(n)\in\mathcal{N}$. 
We also define the set~$\mathcal{S}(n)$ of successor nodes as the 
set of nodes with ancestor~$n$, that is, 
$\mathcal{S}(n) = \{m \in\mathbb{N}\mid  a(m) = n \}$, and for
each successor~$m$ there is a transition probability denoted by~$q_{nm}$ 
from node~$n$ to~$m$.
From $1$ and $\mathcal{S}$,
we define the \emph{stage} $t(n)$ of a node~$n\in\mathbb{N}$:
the root node belongs to stage~$1$,
and inductively the nodes in $S(n)$ belong to the stage $t(n)+1$.
The set of all nodes in stage~$\tau$ is denoted by~$\mathcal{N}_\tau$.

In the dynamic programming formulation of stochastic optimization,
we have a state variable~$x_n$ and a mixed integer control variable~$y_n$,
ranging over a feasible set $X_n$ and incurring an immediate cost $f_n(x_n,y_n)$.
As it is standard, we introduce a copy variable~$z_n$ that
carries the information from the previous state, so that the
cost-to-go and expected cost-to-go functions~$Q_n(\cdot)$ 
and~$\overline{Q}_n(\cdot)$ of each node~$n\in\mathcal{N} \backslash \{1\}$
satisfy the following recursive relationship:
\begin{align}\label{eq:multi_orig}
&\begin{array}{rl}
Q_n(x_{a(n)}) = \underset{x_n,y_n,z_n}{\min} & f_n(x_n,y_n) + \overline{Q}_{n}(x_n) \\
\text{s.t.} & (x_n,y_n,z_n) \in X_{n}, \\
& z_n = x_{a(n)},
\end{array}\\\label{eq:ectg_true}
&\overline{Q}_{n}(x_n) = 
\begin{cases}
0 & \text{if $\mathcal{S}(n) = \emptyset$,} \\
\sum_{m \in \mathcal{S}(n)} q_{nm} \cdot Q_{m}(x_n)  & \text{otherwise.}
\end{cases}
\end{align}
The nodes $n$ without successor are called \emph{leaf} nodes of the tree,
and they correspond to the last decision to be taken in the planning horizon.
Also, observe that the root node $1 \in \mathcal{N}$ does not have an ancestor,
so we can still define $\Qbar_1(x_1)$ by~\eqref{eq:ectg_true}, but
its stage problem~\eqref{eq:multi_orig} should be written as
\[
  \begin{array}{rl}
    Q_1 = \underset{x_1,y_1}{\min} & f_1(x_1,y_1) + \overline{Q}_1(x_1) \\
    \text{s.t.} & (x_1,y_1) \in X_1.
  \end{array}
\]
However, in order to avoid having to single out this special case,
we slightly abuse notation by fixing $0 = a(1)$, $x_0 = 0$,
and extend $X_1$ to a further dimension $z_1$.

From our discussion in the previous section,
we assume that $f_n$ is Lipschitz continuuos with constant $L_n$
and that $X_n$ satisfies the complete continuous recourse condition.
Under those hypothesis,
both the cost-to-go functions~$Q_n(\cdot)$,
and the expected cost-to-go functions~$\overline{Q}_n(\cdot)$
are Lipschitz continuous.

\begin{prop}[Stochastic multistage MILP programs]\label{cor:lips_multi}
Consider the stochastic multistage MILP program defined by~\eqref{eq:multi_orig}
and suppose that for every node~$n\in\mathcal{N}$ the cost-to-go function~$Q_n$ 
is not equal to $-\infty$ in any point, i.e.,~$Q_n(\cdot) > -\infty$, 
and the CCR condition holds for the feasible set~$X_n$. 
Then, the expected cost-to-go function~$\overline{Q}_n(\cdot)$ is 
Lipschitz continuous in~$\mathbb{R}^{d_n}$ with Lipschitz constant at most
\begin{equation}\label{eq:Lipschitz_estimates}
\widetilde{L}_{n} =
\begin{cases}
0 & \text{, if $\mathcal{S}(n) = \emptyset$,} \\
\sum_{m \in \mathcal{S}(n)} q_{nm} \cdot \big(L_m + \widetilde{L}_m \big) \cdot r_m & \text{, otherwise}
\end{cases}
\end{equation}
where $r_m$ is a constant that depends only on~$X_m$.
\end{prop}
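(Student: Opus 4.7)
The plan is to prove the statement by backward induction on stages of the scenario tree, propagating Lipschitz estimates from the leaves back to the root. For the base case, whenever $\mathcal{S}(n) = \emptyset$ the function $\overline{Q}_n$ is identically zero by~\eqref{eq:ectg_true}, which is trivially Lipschitz with constant $\widetilde{L}_n = 0$. The inductive hypothesis assumes that for every successor $m \in \mathcal{S}(n)$ the expected cost-to-go $\overline{Q}_m$ is Lipschitz with constant $\widetilde{L}_m$, and the goal is to bound the Lipschitz constant of $\overline{Q}_n$ by $\sum_{m \in \mathcal{S}(n)} q_{nm}(L_m + \widetilde{L}_m) r_m$.

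The inductive step breaks into two substeps. First, I want to show that each cost-to-go $Q_m$ is Lipschitz with constant $(L_m + \widetilde{L}_m) r_m$. Looking at~\eqref{eq:multi_orig}, the objective $f_m(x_m,y_m) + \overline{Q}_m(x_m)$ is a sum of two Lipschitz functions, so by the sum rule recalled in Section~\ref{subsec:lipschitz_cuts} it is Lipschitz in $(x_m,y_m,z_m)$ with constant $L_m + \widetilde{L}_m$. The feasible set is $X_m \cap \{z_m = x_{a(m)}\}$, parameterized by $b = x_{a(m)}$. Since $x_m$ is mixed-integer, I decompose $X_m$ as a finite union $\bigcup_{i \in I_m} P_{m,i}$ of polyhedra indexed by the integer realizations, and the CCR hypothesis guarantees that $\proj_b(P_{m,i}) = \mathbb{R}^{d_n}$ for every $i$. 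Applying Theorem~\ref{thm:lips_cost-to-go} to each piece yields optimal-value functions $\nu_{m,i}$ that are Lipschitz on all of $\mathbb{R}^{d_n}$ with constant $(L_m + \widetilde{L}_m) r_{m,i}$. Taking $r_m = \max_{i \in I_m} r_{m,i}$ (depending only on $X_m$) and using that $Q_m = \min_i \nu_{m,i}$, the min-rule for Lipschitz constants gives that $Q_m$ is Lipschitz with constant $(L_m + \widetilde{L}_m) r_m$.

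Second, I assemble $\overline{Q}_n$: because the transition probabilities $q_{nm}$ are non-negative scalars and $\overline{Q}_n = \sum_{m \in \mathcal{S}(n)} q_{nm} Q_m$, the sum rule gives the Lipschitz constant bound
\[
  \text{Lip}(\overline{Q}_n) \leq \sum_{m \in \mathcal{S}(n)} q_{nm} \cdot (L_m + \widetilde{L}_m) \cdot r_m = \widetilde{L}_n,
\]
which matches~\eqref{eq:Lipschitz_estimates} and closes the induction. The hypothesis $Q_n > -\infty$ is invoked implicitly to ensure the stage problem is finite-valued where it is feasible, so that Theorem~\ref{thm:lips_cost-to-go} applies.

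The main obstacle is the first substep: the parameterized feasible set involves mixed-integer variables, so Theorem~\ref{thm:lips_cost-to-go}, which is stated for a single polyhedron, does not apply directly. I need to justify carefully that $X_m$ splits into finitely many polyhedral pieces satisfying CCR individually, and that the maximum of finitely many Hoffman constants $r_{m,i}$ can be taken as a single constant $r_m$ depending only on $X_m$. A secondary subtlety is that the objective itself inherits the recursively defined Lipschitz constant $\widetilde{L}_m$ from the next stage, so the induction must proceed strictly from leaves to root and the sum rule $L_m + \widetilde{L}_m$ (rather than a maximum) must be used when combining the immediate cost with the expected cost-to-go.
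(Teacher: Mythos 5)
Your proof is correct and follows essentially the same route as the paper's: backward induction on the tree, applying Theorem~\ref{thm:lips_cost-to-go} to get that each $Q_m$ is Lipschitz with constant $(L_m+\widetilde{L}_m)\,r_m$, and then taking the weighted average. The extra care you take in the first substep (splitting $X_m$ into polyhedral pieces over the integer realizations, invoking CCR on each piece, and taking $r_m$ as the maximum of the finitely many Hoffman constants) is exactly the content the paper establishes in Section~\ref{subsec:MIP_optfunc} around~\eqref{eq:opt_polyunion}--\eqref{eq:opt_polypart} and then invokes implicitly, so it is a welcome expansion rather than a departure.
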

\begin{proof}
	We proceed by backward induction on the scenario tree.
	For the leaf nodes, the statement holds by definition,
	since~$\overline{Q}_n(\cdot)$ is  identically zero.

	So, suppose this result holds for all successor nodes $m \in \mathcal{S}(n)$,
	and let's prove that it also holds for node~$n$.
	By the induction hypothesis, the expected cost-to-go 
	functions~$\overline{Q}_m(\cdot)$ are Lipschitz with constant~$\widetilde{L}_m$,
	and from Theorem~\ref{thm:lips_cost-to-go} each~$Q_m(\cdot)$ is Lipschitz with
	constant~$(L_m + \widetilde{L}_m)\cdot r_m$, where 
	$L_m$ is the Lipschitz constant of the objective function~$f_m$ 
	and $r_m$ is a constant from the Hoffman Lemma that only depends 
	on~$X_m$.  
	Since the expected value of Lipschitz functions is also 
	Lipschitz with constant equal to the expected value constant,
	the induction step is proved.
\end{proof}

Since problem~\eqref{eq:multi_orig} for each node
admits the Lipschitz decomposable structure of~\eqref{eq:determ_main},
one could imagine using the reverse-norm method of Algorithm~\ref{alg:cutting_tents},
or augmented Lagrangian cuts, to approximate its solution.
However, in the multistage case we lack one fundamental property we used,
namely that we can compute exactly the `complex' function $g(x)$,
which in this case is $\overline{Q}_n(x_n)$.
Indeed, we're only able to produce lower approximations for it,
and the next sections will deal with the necessary estimates
to prove convergence under this weaker hypothesis.

\subsection{Approximating the value functions}
Before we present the SLDP algorithm,
we need to introduce some notation
for the approximations along the iterations of the algorithm.
As usual, we denote by~$\overline{\mathfrak{Q}}_n^k(x_n)$ the expected cost-to-go
approximation induced by the Lipschitz cuts
at iteration~$k$.
For the purpose of convergence analysis, we
consider the approximations $Q_n^k(x_{a(n)})$ and~$\overline{Q}_n^k(x_{a(n)})$ 
of the cost-to-go and expected cost-to-go functions at iteration~$k$
defined below:
\begin{align}\label{eq:ctg_approx}
&\begin{array}{rl}
Q_n^k(x_{a(n)}) = \underset{x_n,y_n,z_n}{\min} & f_n(x_n,y_n) + \overline{\mathfrak{Q}}_{n}^{k}(x_n) \\
\text{s.t.} & (x_n,y_n,z_n) \in X_n, \\
& z_n = x_{a(n)},
\end{array}\\\label{eq:ectg_approx}
&\overline{Q}_{n}^k(x_n) = 
\begin{cases}
0 & \text{if $\mathcal{S}(n) = \emptyset$,} \\
\sum_{m \in \mathcal{S}(n)} q_{nm} \cdot Q_{m}^k(x_n)  & \text{otherwise}
\end{cases}
\end{align}
We assume we are given, for the first iteration,
a Lipschitz lower approximation~$\Qfrak_n^1(\cdot)$
of the expected cost-to-go function~$\overline{Q}_n(\cdot)$
for each node $n$ in the tree.
In practice, the first expected cost-to-go approximations
are identically zero, since costs are usually non-negative.

Then, we update the expected cost-to-go
approximation~$\overline{\mathfrak{Q}}_n^k(x_n)$ of iteration~$k$ at 
a given point~$x_n^k$ using the reverse-norm
cut~$\overline{Q}_n^{k+1}(x_{n}^k) - \rho_n \cdot \|x_{n} - x_{n}^k\|$:
\begin{equation}\label{eq:cut_hypo}
\overline{\mathfrak{Q}}_n^{k+1}(x_{n}) = \max\left\{\overline{\mathfrak{Q}}_n^{k}(x_{n}),\ \overline{Q}_n^{k+1}(x_{n}^k) - \rho_n \cdot \|x_{n} - x_{n}^k\| \right\},
\end{equation}
where~$\rho_n > 0$ is any constant greater than or equal to the Lipschitz
constant~$\widetilde{L}_n$ defined on~\eqref{eq:Lipschitz_estimates}. 
Note that we have 
used~$\overline{Q}_n^{k+1}(x_{n}^k)$ instead of~$\overline{Q}_n^{k}(x_{n}^k)$
in the cut update~\eqref{eq:cut_hypo} because the Lipschitz cuts of the SLDP
are updated from the last to the first stage, so all 
expected cost-to-go approximations~$\overline{\mathfrak{Q}}_m^{k}(x_{m})$
of the successor nodes~$m$ of $n$ are updated
to~$\overline{\mathfrak{Q}}_m^{k+1}(x_{m})$ before the computation
of the optimal value~\eqref{eq:ctg_approx} with state $x_{a(m)} = x_n^k$.
So, given each node~$m$ and iteration~$k$ we obtain in the backward step 
the cost-to-go approximation $Q_m^{k+1}(x_{a(m)})$ evaluated at~$x_n^k$,
and since the expected cost-to-go approximation is the corresponding
weighted average, we obtain~$\overline{Q}_n^{k+1}(x_{n}^k)$ for the
Lipschitz cut~\eqref{eq:cut_hypo}.

There are some important comments about the concepts introduced so far. 
First, we note that the 
sequence ~$\{\overline{\mathfrak{Q}}_n^{k}\}_{k\in\mathbb{Z}_+}$ 
is a non-decreasing sequence of functions, 
and since it belongs to the objective function of~\eqref{eq:ctg_approx}, 
we conclude that the 
sequence of cost-to-go function
approximations~$\{Q_n^{k}\}_{k\in\mathbb{Z}_+}$ 
is also non-decreasing.
Second, the expected cost-to-go
approximation~$\overline{Q}_{n}^k(x_n)$ defined in~\eqref{eq:ectg_approx}
is a weighted average of non-decreasing functions~${Q}_{m}^k$, so 
the corresponding sequence~$\{\overline{Q}_{n}^k\}_{k\in\mathbb{Z}_+}$ 
is also non-decreasing.
Third, the function~$\overline{Q}_n^k$ plays an important role
in the convergence analysis of the SLDP since the cuts
of~\eqref{eq:cut_hypo} are tight for~$\overline{Q}_n^{k+1}(x_n)$ 
at the forward solution~$x_n^k$.
Last, the quality of the expected cost-to-go
approximation~$\overline{Q}_n^{k}$ of a given node~$n\in\mathcal{N}$ 
depends on the quality of those approximations at the successor 
nodes~$m \in\mathcal{S}(n)$, so this explains the reason of computing
Lipschitz cuts from the last to first stage. 

We will prove convergence of the full scenario (resp. sampled scenario) SLDP algorithm
by proving that the sequence of feasible
policies~$\big(x_n^k\big)_{n\in\mathcal{N}}$ produced by the algorithm
converges to an optimal ($\eps$-optimal) policy~$\big(x_n^*\big)_{n\in\mathcal{N}}$.
As in Lemma~\ref{lemma:cluster_converg}, we show that
$\overline{Q}_n^{k+1}(x_n^k)$ converges to
(and $\eps$-approximation of)~$\overline{Q}_n(x_n^*)$
where~$\overline{Q}_n$ is the \emph{true} expected cost-to-go
function~\eqref{eq:ectg_true}.
In the examples of section~\ref{sec:examples},
we will see that
both expected cost-to-go approximations~$\overline{Q}_n^{k}$
and~$\overline{\mathfrak{Q}}_n^{k}$ approximate the true
expected cost-to-go function~$\overline{Q}_n$ in a neighborhood of the optimal ($\eps$-optimal) policy
solution~$x_n^*$, however those approximations are usually poor elsewhere.

\medskip

To simplify our statements and make the logic in the proofs easier to follow,
we will assume that the starting lower bounds
$\Qfrak_n^1$ are also valid lower bounds for $\Qbar_n^1$,
which imposes a compatibility constraint between
$\Qfrak_n^1$ and its successors $\Qfrak_m^1$ for $m \in \mathcal{S}(n)$.
If one doesn't have this property,
then only the inequalities
$\Qfrak_n^k \leq \Qbar_n$ and $\Qbar_n^k \leq \Qbar_n$
are ensured in the following Lemma.
Again, in common situations where costs are positive
and all $\Qfrak_n^1 = 0$, this is immediately satisfied.

By the definition of cost-to-go and expected cost-to-go approximations,
they form a monotone sequence of valid
lower bounds for the true expected cost-to-go functions:
\begin{lemma}\label{lemma:ctg_desig}
	Consider the stochastic multistage MILP program~\eqref{eq:multi_orig} 
	satisfying the CCR condition, and let~$Q_n^k$, $\overline{Q}_n^k$
	and $\overline{\mathfrak{Q}}_n^k$ be the cost-to-go and expected 
	cost-to-go approximations of~\eqref{eq:ctg_approx}, \eqref{eq:ectg_approx}
	and~\eqref{eq:cut_hypo}.
	Then
	\begin{equation}\label{eq:approx_ineq}
	\overline{\mathfrak{Q}}_n^k(\cdot) \leq \overline{Q}_n^k(\cdot) \leq \overline{Q}_n(\cdot),
	\end{equation}
	for every node~$n \in \mathcal{N}$ and iteration~$k \in \mathbb{Z}_+$.
\end{lemma}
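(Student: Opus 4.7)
My plan is to establish both inequalities simultaneously by an outer induction on the iteration index~$k$, with a nested backward induction on the scenario tree inside each iteration. Along the way I will use two auxiliary properties: (i) pointwise monotonicity of the sequences $\{\Qfrak_n^k\}_k$ and $\{\Qbar_n^k\}_k$ in $k$, which is immediate from the max-rule~\eqref{eq:cut_hypo} combined with the fact that raising the objective of~\eqref{eq:ctg_approx} can only raise its optimum; and (ii) Lipschitz continuity of each $\Qbar_n^k$ with constant at most $\rho_n$, obtained by a backward tree induction modeled on Proposition~\ref{cor:lips_multi}, using that $\Qfrak_m^k$ is $\rho_m$-Lipschitz as a maximum of reverse norm cuts with parameter $\rho_m$ and invoking Theorem~\ref{thm:lips_cost-to-go} on the objective $f_m + \Qfrak_m^k$ inside~\eqref{eq:ctg_approx}.

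With these tools in hand, the base case $k=1$ is direct: $\Qfrak_n^1 \leq \Qbar_n^1$ holds by the standing compatibility assumption on the initial bounds, and $\Qbar_n^1 \leq \Qbar_n$ follows by backward tree induction—leaves are trivially zero, and at an internal node $n$ the chain $\Qfrak_m^1 \leq \Qbar_m^1 \leq \Qbar_m$ at every successor $m \in \mathcal{S}(n)$ shows that replacing $\Qfrak_m^1$ by $\Qbar_m$ in~\eqref{eq:ctg_approx} only raises the objective, so $Q_m^1 \leq Q_m$, and averaging yields the claim.

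The induction step $k \mapsto k+1$ is handled in the same spirit: inside iteration $k+1$ I run a backward tree induction establishing both inequalities at each node. The bound $\Qbar_n^{k+1} \leq \Qbar_n$ is obtained exactly as in the base case from the chain $\Qfrak_m^{k+1} \leq \Qbar_m^{k+1} \leq \Qbar_m$ at the already-processed deeper nodes. The bound $\Qfrak_n^{k+1} \leq \Qbar_n^{k+1}$ is the substantive step. Since $\Qfrak_n^{k+1}$ is the maximum of $\Qfrak_n^k$ and the reverse norm cut centered at $x_n^k$, it suffices to check each piece. The outer induction hypothesis combined with monotonicity in $k$ gives $\Qfrak_n^k \leq \Qbar_n^k \leq \Qbar_n^{k+1}$; for the cut $\Qbar_n^{k+1}(x_n^k) - \rho_n\|x_n - x_n^k\|$, property (ii) on $\Qbar_n^{k+1}$ yields $\Qbar_n^{k+1}(x_n^k) \leq \Qbar_n^{k+1}(x_n) + \rho_n\|x_n - x_n^k\|$, which is exactly the statement that the cut lies below $\Qbar_n^{k+1}$ pointwise.

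The main obstacle is property (ii): one must ensure that the Lipschitz constant of the approximation $\Qbar_n^{k+1}$ never exceeds $\rho_n$, even after new cuts have been added at every descendant in iteration $k+1$. The key observation is that a maximum of $\rho_m$-Lipschitz functions is again $\rho_m$-Lipschitz, so a natural initial bound $\Qfrak_m^1 \equiv 0$ keeps the bookkeeping clean. The backward induction in the proof of Proposition~\ref{cor:lips_multi} then transfers verbatim to the approximations, giving a Lipschitz constant for $\Qbar_n^{k+1}$ bounded by $\sum_{m \in \mathcal{S}(n)} q_{nm}(L_m + \rho_m)r_m$; this is consistent with the requirement $\rho_n \geq \widetilde{L}_n$ precisely when the $\rho_m$'s at descendants are themselves tied to $\widetilde{L}_m$, which justifies taking the constants in line with~\eqref{eq:Lipschitz_estimates}.
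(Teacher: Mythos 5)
Your proof is correct and follows the same skeleton as the paper's: a backward induction on the tree gives $\Qbar_n^k \leq \Qbar_n$ by comparing the objectives of~\eqref{eq:ctg_approx} and~\eqref{eq:multi_orig} at the successors, and an induction on $k$, seeded by the compatibility assumption on $\Qfrak_n^1$ and combined with monotonicity in $k$, gives $\Qfrak_n^k \leq \Qbar_n^k$. The one place you go beyond the paper is in justifying that the newly added reverse-norm cut lies below $\Qbar_n^{k+1}$: the paper simply asserts that ``the reverse-norm cut is also a lower bound for $\Qbar_n^k$'', whereas you derive this from your property (ii), a $\rho_n$-Lipschitz bound on the approximation $\Qbar_n^{k+1}$ obtained by pushing the argument of Proposition~\ref{cor:lips_multi} and Theorem~\ref{thm:lips_cost-to-go} through the approximate recursion. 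That is the right way to close that step, and the caveat you raise at the end is genuine: the constant you obtain is $\sum_{m\in\mathcal{S}(n)} q_{nm}\,(L_m+\rho_m)\,r_m$, which is guaranteed to be at most $\rho_n$ only when the cut constants at the successors are tied to the recursion~\eqref{eq:Lipschitz_estimates} (e.g.\ $\rho_m=\widetilde{L}_m$); for an arbitrary choice $\rho_m\gg\widetilde{L}_m$ downstream the middle inequality $\Qfrak_n^k\leq\Qbar_n^k$ could fail, even though $\Qfrak_n^k\leq\Qbar_n$ would survive, since the cut always sits below the true $\Qbar_n$, which is $\widetilde{L}_n$-Lipschitz with $\widetilde{L}_n\leq\rho_n$. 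So your argument is complete under that compatibility of the $\rho$'s, and it makes explicit a hypothesis the paper's own proof leaves implicit.
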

\begin{proof}
	We proceed by backward induction on the tree.
	For leaf nodes, inequality~\eqref{eq:approx_ineq} holds because
	$\overline{Q}_n^k$ and $\overline{Q}_n$ are identically zero,
	by definition.
	Suppose that inequality~\eqref{eq:approx_ineq} holds
	for every successor node $m \in \mathcal{S}(n)$ at iteration $k$.
	By the induction hypothesis, 
	the function~$\overline{\mathfrak{Q}}_m^k$ is less than
	or equal to~$\overline{Q}_m$, and by the optimization problems~\eqref{eq:multi_orig} and~\eqref{eq:ctg_approx} 
	we conclude that the cost-to-go
	approximation~$Q_m^k(\cdot)$ is less than or equal to the 
	true cost-to-go function~$Q_m$.
	Since we guarantee this property for every successor node~$m$, 
	we get the same inequality for their respective weighted
	averages~$\overline{Q}_n^k$ and~$\overline{Q}_n$.
	
	Now, let's prove that~$\overline{\mathfrak{Q}}_n^k$
	is less than or equal to~$\overline{Q}_n^k$ by induction on the 
	iteration~$k$.
	In the first iteration, the cost-to-go
	approximation~$\overline{\mathfrak{Q}}_n^1$ is less than or
	equal to~$\overline{Q}_n^1$ by hypothesis.
	Suppose that~$\overline{\mathfrak{Q}}_n^j$
	is less than or equal to~$\overline{Q}_n^j$ for every iteration~$j$ less than~$k$.
	We will prove that such inequality also holds
	for iteration~$k$.
	Indeed, by the induction hypothesis and the non-decreasing property 
	of~$\{\overline{Q}_n^k\}_{k\in\mathbb{Z}_+}$, we have the following inequalities: 
	\[
	\overline{\mathfrak{Q}}_n^{k-1}(\cdot) \leq \overline{Q}_n^{k-1}(\cdot)
	\leq \overline{Q}_n^{k}(\cdot).
	\]
	Using the updating formula~\eqref{eq:cut_hypo}, we conclude 
	that~$\overline{\mathfrak{Q}}_n^k$ is less than or equal to~$\overline{Q}_n^k$
	because the reverse-norm cut is also a lower bound for $\overline{Q}_n^k$.
\end{proof}

Throughout this paper we assume the CCR condition for the 
true stochastic multistage MILP program~\eqref{eq:multi_orig}. 
Additionally, we also require the set of feasible 
policy's states~$\proj_x X_n$ to be compact,
and we name the resulting assumption as the
\emph{Compact State Complete Continuous Recourse} (CS-CCR) condition.

\subsection{Full scenario approach}

The SLDP algorithm for the full scenario approach 
is analogous to the Nested Cutting Plane algorithm, 
but with Lispchitz cuts instead of linear cuts. 
As described in Algorithm~\ref{alg:full_ald},
starting from a valid lower bound $M_n$ for all cost-to-go functions,
and an upper bound $\rho_n$ for their Lipschitz constants,
we improve the lower bounds
near the candidate optimal solutions of each iteration.
Thus, in the forward step, the full scenario SLDP algorithm solves the optimization
problems~\eqref{eq:ctg_approx} from the root to the leaves,
that is, in ascending order of stages,
and obtains feasible state and control variables~$(x_n,y_n,z_n) \in X_n$
for each node of the scenario tree.
Then, in the backward step, it updates from the leaves to the root
the expected cost-to-go approximation~$\overline{\mathfrak{Q}}_n^k$
using formula~\eqref{eq:cut_hypo} and the Lipschitz cuts centered 
at the states obtained in the forward step.

\begin{algorithm}[!hbt]
	\caption{Full scenario SLDP}\label{alg:full_ald}
		\begin{flushleft}
		\textbf{Input:} lower bound~$\overline{\mathfrak{Q}}_n^1 \equiv M_n$,
		cut constant~$\rho_n \geq L_n$.\\
		\textbf{Output:} policy~$(x,y,z) \in \mathbb{X}$ and 
		expected cost-to-go~$\{\overline{\mathfrak{Q}}_n^k\}_{n\in\mathcal{N}}$
		approximations.
	\end{flushleft}	
	Set initial iteration $k=1$ and go to step 1
	\begin{algorithmic}[1]
		\State Forward step: set $n = 1$, $x_0^k = 0$. 
		For each $t$ from $1$ to $T-1$, and for each $n \in \mathcal{N}_t$ do:
		\begin{enumerate}
			\item[(a)] Solve the problem corresponding to $Q_n^k(x_{a(n)}^k)$ and obtain $(x_n^k,y_n^k,z_n^k)$:
			\[\begin{array}{rl}
			\underset{x_n,y_n,z_n}{\min} & f_n(x_n,y_n) + \overline{\mathfrak{Q}}_{n}^{k}(x_n) \\
			\text{s.t.} & (x_n,y_n,z_n) \in X_n, \\
			& z_n = x_{a(n)}^k;
			\end{array}\]						
		\end{enumerate}
		\State Backward step: For each $t$ from $T-1$ to $1$, and for each $n \in \mathcal{N}_t$ do:
		\begin{enumerate}
			\item[(a)] update 
			$\overline{\mathfrak{Q}}_n^{k+1}(x_{n}) = \max\left\{\overline{\mathfrak{Q}}_n^{k}(x_{n}),\ \overline{Q}_n^{k+1}(x_{n}^k) - \rho_n \cdot \|x_{n} - x_{n}^k\| \right\}$, where			
			\begin{align*}
			&\ \, \overline{Q}_{n}^{k+1}(x_n^k) = 				\sum_{m \in \mathcal{S}(n)} q_{nm} \cdot Q_{m}^{k+1}(x_n^k), \text{ and }\\
			&\begin{array}{rl}
			Q_m^{k+1}(x_{n}^k) = \underset{x_m,y_m,z_m}{\min} & f_m(x_m,y_m) + \overline{\mathfrak{Q}}_{m}^{k+1}(x_m) \\
			\text{s.t.} & (x_m,y_m,z_m) \in X_m, \\
			& z_m = x_n^k;
			\end{array}
			\end{align*}						
		\end{enumerate}
		\State Increase $k$ by $1$, and go to step $1$.
	\end{algorithmic}
\end{algorithm}

We have not provided a stopping criterion for Algorithm~\ref{alg:full_ald}.
Although in the full scenario case we could have chosen a criterion
equivalent to the one in Algorithm~\ref{alg:cutting_tents},
in the sampled case one would need to
compute the optimal solution at every node of the scenario tree
to have a deterministic upper bound for the optimal policy,
which is unrealistic.
So, we preferred to emphasize the similarities between the full and sampled scenario cases.
and present their convergence results only in asymptotic form.

In order to simplify the notation and improve readability, 
we will assume that a variable $x$ or a vector $(x,y,z)$ that do not 
have the subscript~$n$ is the vector composed by the corresponding variables or vectors
for all nodes:
\begin{itemize}
	\item $x := (x_n)_{n\in\mathcal{N}}$;
	\item $(x,y,z) := \Big((x_n,y_n,z_n)\Big)_{n\in\mathcal{N}}$.
\end{itemize}
We refer to $(x,y,z)$ as a policy and~$x$ as the policy's states,
and we denote by~$\mathbb{X}$ the set of feasible policies and 
by $\proj_x \mathbb{X}$ the projection of~$\mathbb{X}$ in the policy's states.

By analogy with the proof of the (deterministic) reverse-norm method
in Lemma~\ref{lemma:cluster_converg} and Theorem~\ref{thm:cvg_rnm},
we start proving that
the expected cost-to-go approximation~$\overline{\mathfrak{Q}}_n^k$
approximates the true expected cost-to-go function~$\overline{Q}_n$
in a neighborhood of any cluster state
induced by the forward step.

\begin{lemma}\label{lemma:cluster_policy}
	Let $x^* \in \proj_x{\mathbb{X}}$ be a cluster point of
	the sequence of policy states~$\{x^k\}_{k\in\mathbb{N}}$
	generated by the forward step of Algorithm~\ref{alg:full_ald},
	and let $\mathcal{K}$ be the indices of a subsequence
	that converges to~$x^*$.
	Then $\{\overline{\mathfrak{Q}}_n^k(x_n^k)\}_{k\in\mathcal{K}}$ converges to $\overline{Q}_n(x_n^*)$,
	\begin{equation}\label{eq:induction_hypo}
	\lim\limits_{k\in\mathcal{K}} \overline{\mathfrak{Q}}_n^k(x_n^k) = \overline{Q}_n(x_n^*),
	\end{equation}
	for every node~$n\in\mathcal{N}$.
\end{lemma}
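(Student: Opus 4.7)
The plan is to proceed by backward induction on the stage $t(n)$, mirroring the structure of Lemma~\ref{lemma:cluster_converg} but with the added twist that $\overline{Q}_n$ cannot be evaluated exactly---only lower approximations are available. At leaf nodes both $\overline{\mathfrak{Q}}_n^k$ and $\overline{Q}_n$ vanish identically (assuming $M_n = 0$ for leaves, consistent with the standing inequality $\overline{\mathfrak{Q}}_n^1 \leq \overline{Q}_n$), so the base case is immediate. Assuming the conclusion for every successor $m \in \mathcal{S}(n)$, I would prove it for $n$ by exhibiting matching upper and lower bounds on $\overline{\mathfrak{Q}}_n^k(x_n^k)$.

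The upper bound is routine: Lemma~\ref{lemma:ctg_desig} gives $\overline{\mathfrak{Q}}_n^k(x_n^k) \leq \overline{Q}_n(x_n^k)$, and Lipschitz continuity of $\overline{Q}_n$ from Proposition~\ref{cor:lips_multi} yields $\overline{Q}_n(x_n^k) \to \overline{Q}_n(x_n^*)$ along~$\mathcal{K}$.

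For the lower bound, let $j(k)$ denote the predecessor of $k$ in $\mathcal{K}$. I would combine three facts: that $\overline{\mathfrak{Q}}_n^k$ is $\rho_n$-Lipschitz, that $\{\overline{\mathfrak{Q}}_n^k\}$ is nondecreasing in $k$, and that the cut added at iteration $j$ is tight for $\overline{Q}_n^{j+1}$ at $x_n^j$ (which follows from~\eqref{eq:cut_hypo} together with Lemma~\ref{lemma:ctg_desig} and monotonicity of $\{\overline{Q}_n^k\}$). This yields
\[
  \overline{\mathfrak{Q}}_n^k(x_n^k) \geq \overline{Q}_n^{j(k)+1}\bigl(x_n^{j(k)}\bigr) - \rho_n \cdot \bigl\|x_n^k - x_n^{j(k)}\bigr\|.
\]
Since $x_n^k, x_n^{j(k)} \to x_n^*$ along $\mathcal{K}$, the distance term vanishes, and it suffices to show $\overline{Q}_n^{j+1}(x_n^j) \to \overline{Q}_n(x_n^*)$, which by linearity of the expectation reduces to the scalar convergence $Q_m^{j+1}(x_n^j) \to Q_m(x_n^*)$ for each successor $m$.

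The hard part is precisely this last convergence, because it is where one must bridge the gap between the approximations produced by the algorithm and the true cost-to-go, using only the induction hypothesis at the successor cluster points. The key trick is that the forward solution $(x_m^j, y_m^j, z_m^j) \in X_m$ is feasible for the \emph{true} stage problem defining $Q_m(x_n^j)$, whence $Q_m(x_n^j) \leq f_m(x_m^j, y_m^j) + \overline{Q}_m(x_m^j)$; on the other hand, monotonicity of the approximations gives $Q_m^{j+1}(x_n^j) \geq Q_m^j(x_n^j) = f_m(x_m^j, y_m^j) + \overline{\mathfrak{Q}}_m^j(x_m^j)$. Subtracting, the (unknown) $f_m$-contribution cancels and
\[
  Q_m^{j+1}(x_n^j) \geq Q_m(x_n^j) - \bigl[\overline{Q}_m(x_m^j) - \overline{\mathfrak{Q}}_m^j(x_m^j)\bigr].
\]
The bracket tends to zero by the induction hypothesis at $m$ (giving $\overline{\mathfrak{Q}}_m^j(x_m^j) \to \overline{Q}_m(x_m^*)$) combined with Lipschitz continuity of $\overline{Q}_m$ (giving $\overline{Q}_m(x_m^j) \to \overline{Q}_m(x_m^*)$); together with the companion upper bound $Q_m^{j+1}(x_n^j) \leq Q_m(x_n^j) \to Q_m(x_n^*)$ from Lemma~\ref{lemma:ctg_desig} and Lipschitz continuity of $Q_m$, this closes the induction. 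A pleasant feature of this strategy is that it avoids extracting further subsequences from the control variables $y_m^j$, since the term $f_m(x_m^j, y_m^j)$ is absorbed identically on both sides of the subtraction.
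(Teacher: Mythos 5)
Your proposal is correct and follows essentially the same route as the paper's proof: the upper bound via Lemma~\ref{lemma:ctg_desig} and continuity, the lower bound via the cut added at the previous index $j$ of $\mathcal{K}$ together with $\rho_n$-Lipschitz continuity and monotonicity, and the key cancellation $Q_m^{j}(x_n^j) - Q_m(x_n^j) \geq \overline{\mathfrak{Q}}_m^j(x_m^j) - \overline{Q}_m(x_m^j)$ obtained from optimality of the forward solution for the approximate stage problem and its feasibility for the true one, which is exactly inequality~\eqref{eq:induct_ineq}. The only difference is cosmetic: you bound each successor term $Q_m^{j+1}(x_n^j)$ individually before averaging, while the paper bounds the averaged quantity $\overline{Q}_n^j(x_n^j) - \overline{Q}_n(x_n^j)$ directly.
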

\begin{proof}
	Let~$\{(x^k,y^k,z^k)\}_{k\in\mathbb{N}}$ be the sequence of policies 
	obtained in the forward step of algorithm~\ref{alg:full_ald}.
	By the compactness assumption of~$\proj_x X_n$, the 
	set of feasible policy states~$\proj_x \mathbb{X}$ is also compact, so
	there is a subsequence of~$\{x^k\}_{k\in\mathbb{N}}$
	that converges to a cluster point~$x^* \in \proj_x\mathbb{X}$.
	Denote by~$\mathcal{K}$ the indices of this subsequence, that is,
	$\lim\limits_{k \in \mathcal{K}} x^k = x^*$.
	We will show that equation~\eqref{eq:induction_hypo} holds 
	by backward induction on the tree.
	It trivially holds for the leaf nodes,
	since both functions~$\overline{\mathfrak{Q}}_n$ and~$\overline{Q}_n$
	are identically zero, by hypothesis.

	Now, suppose that equation~\eqref{eq:induction_hypo} holds for
	every successor node~$m \in \mathcal{S}(n)$.
	From Lemma~\ref{lemma:ctg_desig}, we have an upper bound:
  \[
    \Qbar_n(x_n^k) \geq \Qfrak_n^k(x_n^k),
  \]
  which, by continuity of $\Qbar_n$, yields:
  \begin{equation}
    \label{eq:qbar_qfrak_ft}
    \Qbar_n(x_n^*) \geq \limsup \Qfrak_n^k(x_n^k).
  \end{equation}
  So we only need to prove that the lower approximations are large enough.

  As in Lemma~\ref{lemma:cluster_converg},
  we denote by~$j$ the index in~$\mathcal{K}$ immediately before~$k$.
  By monotonicity of the approximations,
  $\Qfrak_n^k$ is larger than all of the Lipschitz cuts constructed,
  in particular the one at iteration $j$.
  Therefore,
  \[
    \Qfrak_n^k(x_n^k)
    \geq \Qbar_n^{j+1}(x_n^j) - \rho_n \cdot \norm{x_n^j - x_n^k}.
  \]
  Note that, differently from Lemma~1,
  we don't obtain the exact expected cost-to-go function $\Qbar_n$,
  but only its approximation $\Qbar_n^{j+1}$.
  That's why our proof splits in two parts:
  one bounding the difference between $\Qbar_n^j$ and~$\Qfrak_n^k$,
  and the other bounding the one between $\Qbar_n$ and~$\Qbar_n^j$.
  Let's complete the first one, which we already started.
  Since $\Qbar_n^j$ is an increasing sequence, $\Qbar_n^j \leq \Qbar_n^{j+1}$, and we obtain
  \begin{equation}
    \label{eq:qfrak_qbark_ft}
    \Qfrak_n^k(x_n^k)
    \geq \Qbar_n^j(x_n^j) - \rho_n \cdot \norm{x_n^j - x_n^k}.
  \end{equation}

  To show that $\Qbar_n$ and $\Qbar_n^j$ are close,
  we use their definitions in~\eqref{eq:ectg_approx} and~\eqref{eq:ectg_true}:
	\begin{align} \nonumber
    \Qbar_n^j(x_n^j) - \Qbar_n(x_n^j)
    &   =  \sum_{m \in \mathcal{S}(n)} q_{nm}\cdot \left[Q_m^j(x_n^j) - Q_m(x_n^j) \right] \\
    \label{eq:induct_ineq}
    & \geq \sum_{m \in \mathcal{S}(n)} q_{nm}\cdot \left[\Qfrak_m^j(x_m^j) - \Qbar_m(x_m^j) \right],
	\end{align}
	where the inequality follows because~$(x_m^j,y_m^j,z_m^j)$
	are optimal solutions to~\eqref{eq:ctg_approx}
	and feasible solutions to~\eqref{eq:multi_orig} for each $m \in \mathcal{S}(n)$.
  Taking~\eqref{eq:qfrak_qbark_ft} and~\eqref{eq:induct_ineq} together
  and rearranging terms we get
  \[
    \Qfrak_n^k(x_n^k)
    \geq \Qbar_n(x_n^j) - \rho_n \cdot \norm{x_n^j - x_n^k}
         - \sum_{m \in \mathcal{S}(n)} q_{nm}\cdot \left[\Qbar_m(x_m^j) - \Qfrak_m^j(x_m^j) \right].
  \]

  Now, take the limit as $k$ goes to $\infty$,
  which also makes $j$ grow to $\infty$, and both $x^k$ and $x^j$ converge to $x^*$.
  Since the expected cost-to-go function~$\Qbar_n$ is continuous, we obtain:
  \[
    \liminf \Qfrak_n^k(x_n^k) \geq \Qbar_n(x_n^*)
  \]
  because both residual terms vanish in the limit,
  the second one going to zero by our induction hypothesis.
  Together with the upper bound from equation~\eqref{eq:qbar_qfrak_ft},
  this shows that the limit exists and concludes our proof.
\end{proof}

As a consequence of Lemmas~\ref{lemma:ctg_desig} and~\ref{lemma:cluster_policy},
the expected cost-to-go 
approximation~$\overline{Q}_n^k$ also 
approximates the true expected cost-to-go 
function~$\overline{Q}_n$ in a neighborhood of 
any cluster point of the sequence of feasible policy's states
induced by the forward step of the full scenario SLDP.
Using the argument that leads to
inequality~\eqref{eq:induct_ineq} in the proof of 
Lemma~\ref{lemma:cluster_policy} we get 
that the cost-to-go approximation~$Q_n^k$ 
also approximates the true cost-to-go 
function~$Q_n$ in a neighborhood of any cluster policy state. 
That is, the following limits also hold:
\begin{align*}
\lim\limits_{k\in\mathcal{K}}\overline{Q}_{n}^{k}(x_n^k) & = \overline{Q}_n(x_n^*), \\
\lim\limits_{k\in\mathcal{K}}Q_{n}^{k}(x_{a(n)}^k) & = Q_n(x_{a(n)}^*),
\end{align*}
for any convergent subsequence~$\{x^k\}_{k\in\mathcal{K}}$
of policy states induced by the forward
step of the SLDP algorithm, and $x^* \in \proj_x\mathbb{X}$
the corresponding limit point.

\begin{thm}
  \label{thm:cvg_sldp_ft}
	The sequence of lower 
	bounds~$\{Q_1^k\}_{k\in\mathbb{N}}$ 
	induced by the SLDP algorithm~\ref{alg:full_ald} converges
	to the optimal value~$Q_1$ of the true 
	stochastic multistage MILP program~\eqref{eq:multi_orig},
	and every cluster point of the sequence of feasible
	policies~$\{(x^k,y^k,z^k)\}_{k\in\mathbb{N}}$
	generated by the forward step of Algorithm~\ref{alg:full_ald} 
	is an optimal policy.
\end{thm}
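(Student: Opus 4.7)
The plan is to mirror the structure of the deterministic convergence result in Theorem~\ref{thm:cvg_rnm}, replacing the appeal to Lemma~\ref{lemma:cluster_converg} with the multistage analogue established in Lemma~\ref{lemma:cluster_policy}. The two ingredients I need are: (a) that the sequence $\{Q_1^k\}$ is monotone and bounded, so it has a limit, and (b) that along any cluster subsequence of policy states this limit coincides with the true optimum $Q_1$.

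First I would argue existence of cluster points. By the CS-CCR assumption, $\proj_x X_n$ is compact for each node, so the set of feasible policy states $\proj_x \mathbb{X}$ is compact, and the sequence $\{x^k\}$ admits a convergent subsequence along some index set $\mathcal{K}$, with limit $x^* \in \proj_x \mathbb{X}$. Picking further subsequences (and relabeling $\mathcal{K}$), I may assume the controls and copy variables $(y^k, z^k)$ also converge, since each $X_n$ is closed; the limit $(x^*, y^*, z^*)$ is then feasible for~\eqref{eq:multi_orig} at every node.

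Next I would combine monotonicity with the sandwich argument. The sequence $\{\overline{\mathfrak{Q}}_n^k\}$ is non-decreasing by construction of~\eqref{eq:cut_hypo}, which propagates to the approximate stage problems and yields that $\{Q_1^k\}$ is monotone non-decreasing; by Lemma~\ref{lemma:ctg_desig} it is bounded above by $Q_1$. Hence it converges to some $V \leq Q_1$. Along $\mathcal{K}$, continuity of $f_1$ gives $f_1(x_1^k,y_1^k) \to f_1(x_1^*,y_1^*)$, and Lemma~\ref{lemma:cluster_policy} applied at the root yields $\overline{\mathfrak{Q}}_1^k(x_1^k) \to \overline{Q}_1(x_1^*)$, so
\[
  V = \lim_{k\in\mathcal{K}} Q_1^k
    = \lim_{k\in\mathcal{K}} \bigl[f_1(x_1^k,y_1^k) + \overline{\mathfrak{Q}}_1^k(x_1^k)\bigr]
    = f_1(x_1^*,y_1^*) + \overline{Q}_1(x_1^*) \geq Q_1,
\]
the last inequality because $(x^*,y^*,z^*)$ is feasible for~\eqref{eq:multi_orig}. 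Together with $V \leq Q_1$, this forces $V = Q_1$, proving the lower-bound convergence.

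Finally I would deduce optimality of every cluster policy. The chain above also shows $f_1(x_1^*,y_1^*) + \overline{Q}_1(x_1^*) = Q_1$, so $(x_1^*,y_1^*)$ is an optimal first-stage decision. Iterating this reasoning node by node down the tree, using the extension of Lemma~\ref{lemma:cluster_policy} noted in the paper (namely $Q_n^k(x_{a(n)}^k) \to Q_n(x_{a(n)}^*)$ and $\overline{Q}_n^k(x_n^k) \to \overline{Q}_n(x_n^*)$ along $\mathcal{K}$), each $(x_n^*,y_n^*,z_n^*)$ achieves the true cost-to-go, hence is an optimal decision conditional on its ancestor. The main subtlety I expect is keeping the feasibility and continuity arguments clean when passing to the limit simultaneously at every node: one must choose the common subsequence $\mathcal{K}$ so that convergence of states, controls and copy variables holds at all nodes at once, which is possible because $\mathcal{N}$ is finite and each component lives in a compact set.
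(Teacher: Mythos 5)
Your proposal is correct and follows essentially the same route as the paper: compactness to extract a convergent subsequence, Lemma~\ref{lemma:cluster_policy} plus the sandwich between $\overline{\mathfrak{Q}}_1^k$ and $\overline{Q}_1$ at the root, monotonicity to upgrade subsequential convergence of $\{Q_1^k\}$ to full convergence, and forward induction down the tree for optimality of cluster policies. The only cosmetic difference is that you pass to the limit policy $(x^*,y^*,z^*)$ and invoke its feasibility, whereas the paper bounds $Q_1 - Q_1^k$ directly at the iterates; both are valid.
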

\begin{proof}	
	Let~$\{(x^k,y^k,z^k)\}_{k\in\mathbb{N}}$ be the sequence
	of feasible policies generated by the full scenario SLDP
	algorithm~\ref{alg:full_ald}.
	Let $\mathcal{K}$ be the 
	set of indices of a convergent subsequence
	of policy states~$\{x^k\}_{k\in\mathbb{N}}$, and
	let~$x^*$ be the corresponding limit point,
	which exists by compactness of $\proj_x \mathbb{X}$.
	As for equation~\eqref{eq:induct_ineq}, at the root node we have
	\begin{align*}
	0 \leq Q_1 - Q_1^k & \leq 
	\Big[f_1(x_1^k,y_1^k) +  \overline{Q}_1(x_1^k)\Big] 
	-\Big[f_1(x_1^k,y_1^k) +
	\overline{\mathfrak{Q}}_1^{k}(x_1^k)\Big], \\
	& = \overline{Q}_1(x_1^k) -
	\overline{\mathfrak{Q}}_1^{k}(x_1^k),
	\end{align*}
	because $(x_1^k,y_1^k,z_1^k)$ is a feasible solution to
	the optimization problem whose optimal value is $Q_1$
	and optimal solution to that whose optimal value is~$Q_1^k$. 
	Using Lemma~\ref{lemma:cluster_policy}, we conclude  
	the convergence of the subsequence~$\{Q_1^k\}_{k\in\mathcal{K}}$ to~$Q_1$.
	Since the whole sequence~$\{Q_1^k\}_{k\in\mathbb{N}}$ is non-decreasing,
	we get convergence to~$Q_1$.

	Now, suppose that there is a cluster point~$(x,y,z)$ of the sequence of
	feasible policies~$\{(x^k,y^k,z^k)\}_{k\in\mathbb{N}}$,
	and denote also by~$\mathcal{K}$ the set of indices of
	the corresponding subsequence.
	In order to prove that~$(x,y,z)$ is an optimal policy,
	we need to show that its components are optimal solutions
	to the optimization problem of each node~$n\in\mathcal{N}$ whose optimal value is~$Q_n(x_{a(n)})$.
	We will proceed by forward induction on the tree.
	Indeed, we have just shown that $(x_1,y_1,z_1)$ is
	an optimal solution at the root node.
	Now, assume that this result holds for the ancestor node $a(n)$.
	Using the same argument as before, we have the following inequalities:
	\begin{align*}
	f(x_n^k,y_n^k) + \overline{\mathfrak{Q}}_n^k(x_n^k) = Q_n^k(x_{a(n)}^k) 
	\leq Q_n(x_{a(n)}^k) \leq f(x_n^k,y_n^k) + \overline{Q}_n(x_n^k).
	\end{align*}
	So, taking the limit over~$\mathcal{K}$ on both
	sides of the inequality and using 
	Lemma~\ref{lemma:cluster_policy}, we conclude
	that $(x_n,y_n,x_{a(n)})$ is an optimal solution of
	the optimization problem whose optimal value 
	is~$Q_n(x_{a(n)})$.
\end{proof}

Just as it was the case for the proof of both Lemma~\ref{lemma:cluster_converg}
and theorem~\ref{thm:cvg_rnm},
we again just use the same properties of the reverse-norm cuts,
namely that they are uniformly Lipschitz
and that we are able to construct exact cuts at trial points,
for the \emph{approximate} future cost function $\Qbar_n^k$.
As before, this shows that any method of producing \emph{uniformly Lipschitz tight cuts}
in the nested form of stochastic optimization problems
will result in a convergent algorithm on the full scenario approach.
In particular, one can use the augmented Lagrangian cuts
from section~\ref{sec:ald_cuts} provided one takes $\rho_n$ large enough
that the resulting cut is exact.

\subsection{Sampled tree approach}
In multistage stochastic programming problems
with a reasonable number of stages,
it is computationally intractable to visit every node of the scenario tree.
So, one needs to sample paths on the scenario tree
and iteratively approximate the expected cost-to-go functions at each stage
to obtain a ``reasonable'' solution. 
In this paper, we focus on the sampling scheme of one random path
per forward iteration, but its conversion to more general
schemes is straightforward.

We emphasize that a path on the scenario tree
is chosen at random, so  a node~$n$ may not belong 
to the path of some forward step iterations.
Let~$\mathcal{J}_n$ be the set of iterations~$k$ of the Sampled-SLDP for which 
the path of the forward step contains the node~$n$.
Note that~$\mathcal{J}_n$ is a random set, since it depends on
each experiment~$\omega\in\Omega$, and the probability of node~$n$ being
draw an infinite number of times equals one, i.e.,
\[
\mathbb{P}(\{\omega \in \Omega \mid \# \mathcal{J}_n(\omega) = +\infty \}) = 1,
\] 
by the Borel-Cantelli Lemma.
We will assume a realization of the sampling where this is the case,
to avoid repeating ``with probability one'' in what follows.

Also, observe that the collection of
sets~$\{\mathcal{J}_m \mid m \in\mathcal{S}(n)\}$ induced by the successor
nodes covers~$\mathcal{J}_n$, that is
\[
\mathcal{J}_n = 
\bigcup_{m\in\mathcal{S}(n)} \mathcal{J}_m,
\]
since a path that contains a node~$n$ also contains 
some successor node~$m$.
In the deterministic case, the set of iterations~$\mathcal{J}_n$ 
equals~$\mathbb{Z}_+$ for every node~$n$, since all nodes
are visited in the forward step. 
In the analysis of the Sampled-SLDP algorithm, we need to refer
to optimal solutions of nodes that do not belong to a given
forward path,
even if in practice they are not computed.
We still use the same notation~$(x_n^k,y_n^k,z_n^k)$ to refer to an
optimal solution of node~$n$ and iteration~$k$.

Following the same organization of the previous sections,
we would like to prove that for each node~$n$
there is a subset~$\mathcal{K}_n$ of~$\mathcal{J}_n$
such that the following limit holds:
\begin{equation}\label{eq:induction_missing_hypo}
\lim\limits_{k\in\mathcal{K}_n} \overline{\mathfrak{Q}}_n^k(x_n^k) 
\to
\overline{Q}_n(x_n^*),
\end{equation}
where~$\{x_n^k\}_{k\in\mathcal{K}_n}$ is a subsequence 
of policy states converging to a limit point~$x_n^*$.
However, the main obstacle of this lemma 
is the induction step, since
we need to control the difference between~$\overline{Q}_n(x_n^k)$ 
and~$\overline{Q}_n^k(x_n^k)$ using inequality~\eqref{eq:induct_ineq}
or some variation, as in the proof of Lemma~\ref{lemma:cluster_policy}.
Inequality~\eqref{eq:induct_ineq} directly is not suitable for the proof,
because there we implicitly used that~$\mathcal{K}_m$ equals~$\mathcal{K}_n$
for every successor node~$m$
to be able to use the induction hypothesis~\eqref{eq:induction_missing_hypo}.

In order to ensure convergence of the Sampled-SLDP algorithm, 
we consider an additional step to stabilize the policy states
obtained in the forward step.
Instead of computing reverse norm cuts at 
every \emph{new} forward solution~$x_n^k$, 
we check if the new feasible point is more than $\delta > 0$ away
from all previous forward solutions $x_n^{1},\dots,x_n^{k-1}$.
If this is the case, then we update  
the expected cost-to-go function~$\overline{\mathfrak{Q}}_n^k(\cdot)$
with the reverse norm cut centered 
at the new policy state~$x_n^k$;
otherwise we improve it at the closest
previous forward solution~$x_n^j$, 
see Algorithm~\ref{alg:sample_ald}.
Note that after a finite number of iterations
the forward incoming state~$x_n^k$ becomes trapped in a finite number 
of possibilities, since node~$n$ will be visited an infinite number of times
and the set of feasible policy states~$\proj_x X_n$ is compact.
We also show in Lemma~\ref{lemma:finite_policy} that the expected cost-to-go 
approximation~$\overline{\mathfrak{Q}}_n^k$ converges in a finite
number of iterations to a Lipschitz function~$\overline{\mathfrak{U}}_n$, 
which is an $\eps$-approximation of the true expected
cost-to-go function~$\overline{Q}_n$ at any cluster point~$x_n^*$.

\begin{algorithm}[!htb]
	\caption{Sampled SLDP}\label{alg:sample_ald}
\begin{flushleft}
	\textbf{Input:} lower bound~$\overline{\mathfrak{Q}}_n^1 \equiv M_n$,
	cut constant~$\rho_n \geq L_n$.\\
	\textbf{Output:} policy~$(x,y,z) \in \mathbb{X}$ and 
	expected cost-to-go~$\{\overline{\mathfrak{Q}}_n^k\}_{n\in\mathcal{N}}$
	approximations.
\end{flushleft}	
	Set initial iteration $k=1$ and go to step 1	
	\begin{algorithmic}[1]
		\State Forward step: set $n = 1$, $x_0^k = 0$. 
		While $\mathcal{S}(n) \neq \emptyset$ do:
		\begin{enumerate}
			\item[(a)] Solve the problem corresponding to $Q_n^k(x_{a(n)}^k)$ and obtain $(u_n^*,y_n^*,z_n^*)$:
			\[\begin{array}{rl}
			\underset{u_n,y_n,z_n}{\min} & f_n(u_n,y_n) + \overline{\mathfrak{Q}}_{n}^{k}(u_n) \\
			\text{s.t.} & (u_n,y_n,z_n) \in X_n, \\
			& z_n = x_{a(n)}^k;
			\end{array}\]			
			\item[(b)] Compute the minimum 
			$\Delta_n^k = \displaystyle\min_{j=1,\dots,k-1}\|u_n^* - x_{n}^{j}\|$, 
			and denote by $i$ the minimizer index.
			If $\Delta_n^k < \delta$ then set $x_n^k = x_n^{i}$, 
			else set $x_n^k = u_n^*$;			
			\item[(c)] Sample $m \in \mathcal{C}(n)$, and set $n = m$;
		\end{enumerate}
		\State Backward step: Start with the particular node $n$ of the end of step 1.
		While $n$ is not the root $1$, do:
		\begin{enumerate}	
			\item[(a)] update 
			$\overline{\mathfrak{Q}}_n^{k+1}(x_{n}) = \max\left\{\overline{\mathfrak{Q}}_n^{k}(x_{n}),\ \overline{Q}_n^{k+1}(x_{n}^k) - \widetilde{L}_n \cdot \|x_{n} - x_{n}^k\| \right\}$, where
			\begin{align*}
			&\ \, \overline{Q}_{n}^{k+1}(x_n^k) = 				\sum_{m \in \mathcal{S}(n)} q_{nm} \cdot Q_{m}^{k+1}(x_n^k), \text{ and }\\
			&\begin{array}{rl}
			Q_m^{k+1}(x_{n}^k) = \underset{x_m,y_m,z_m}{\min} & f_m(x_m,y_m) + \overline{\mathfrak{Q}}_{m}^{k+1}(x_m) \\
			\text{s.t.} & (x_m,y_m,z_m) \in X_m, \\
			& z_m = x_n^k.
			\end{array}
			\end{align*}
			\item[(b)] keep 
			$\overline{\mathfrak{Q}}_m^{k+1}(\cdot) = \overline{\mathfrak{Q}}_m^{k}(\cdot)$
			for other nodes $n' \in \mathcal{N}_{t(n)}$ such that $n' \neq n$;
			\item[(c)] set $n = a(n)$
		\end{enumerate}
		
		\State Increase $k$ by $1$ and go to step $1$.
	\end{algorithmic}
\end{algorithm}

\begin{lemma} \label{lemma:finite_policy}
With probability one,
the sequence of expected cost-to-go
approximations~$\{\overline{\mathfrak{Q}}_n^k\}_{k\in\mathbb{N}}$
generated by Algorithm~\ref{alg:sample_ald} converges to a Lipschitz function~$\overline{\mathfrak{U}}_n$ with constant~$\widetilde{L}_n$ 
after a finite number of iterations.
Moreover, the following relationships hold for every node $n$ of the tree:
\begin{gather}
\label{eq:induction_sample_hypo}
\lim\limits_{k\in\mathcal{K}_n} \overline{\mathfrak{Q}}_n^k(x_n^k) = \overline{\mathfrak{U}}_n(x_n^*), \\ \label{eq:induction_sample_ineq}
0 \leq \overline{Q}_n(x_n^*) - \overline{\mathfrak{U}}_n(x_n^*)
  \leq (\widetilde{L} + \rho) \cdot \delta \cdot (T - t(n)),
\end{gather}
where~$\mathcal{K}_n$ is a subset of indices from~$\mathcal{J}_n$ such that
the sequence of policy states~$\{x_n^k\}_{k\in\mathcal{K}_n}$ converges,
$x_n^*$ is the corresponding limit point,
$\widetilde{L}$ is the maximum Lipschitz constant~$\widetilde{L}_n$ 
and $\rho$ is the maximum penalty constant~$\rho_n$ over all nodes~$n\in\mathcal{N}$.
\end{lemma}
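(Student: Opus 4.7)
The proof goes by backward induction on the scenario tree. The base case is immediate: when $n$ is a leaf, both $\overline{Q}_n$ and each $\overline{\mathfrak{Q}}_n^k$ are identically zero, so $\overline{\mathfrak{U}}_n \equiv 0$ is trivially Lipschitz with constant $\widetilde{L}_n = 0$, stabilization happens from iteration one, and the gap bound holds with $T - t(n) = 0$.

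For the inductive step at an interior node $n$, the first task is to establish finite-iteration stabilization. Because $\proj_x X_n$ is compact, it admits only finitely many pairwise $\delta$-separated points, so the sequence of snap points $x_n^{i_1}, x_n^{i_2}, \ldots$ created by step~(1b) stabilizes to a finite collection $\{x_n^{i_1},\ldots,x_n^{i_{N_n}}\}$ after a bounded number of forward visits to $n$. Combined with the induction hypothesis that every $\overline{\mathfrak{Q}}_m^k$ for $m \in \mathcal{S}(n)$ equals its stable limit $\overline{\mathfrak{U}}_m$ for large $k$, one picks $K_n$ past which no new snap points appear at $n$ and past which the cut height
\[
\overline{Q}_n^{k+1}(x_n^{i_\ell}) = \sum_{m \in \mathcal{S}(n)} q_{nm}\min\{f_m(x_m,y_m) + \overline{\mathfrak{U}}_m(x_m)\;:\; z_m = x_n^{i_\ell}\}
\]
is a fixed constant for each snap index $\ell$. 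Since $\overline{\mathfrak{Q}}_n^k$ is monotone non-decreasing (Lemma~\ref{lemma:ctg_desig}) and only finitely many distinct cuts can ever be added, strict updates cease past some iteration; denote the resulting stable limit by $\overline{\mathfrak{U}}_n$. As the maximum of finitely many reverse-norm cuts with slope $\widetilde{L}_n$, the limit $\overline{\mathfrak{U}}_n$ is Lipschitz with constant $\widetilde{L}_n$. Identity~\eqref{eq:induction_sample_hypo} then follows because the finitely many snap points are $\delta$-separated, so any convergent subsequence $\{x_n^k\}_{k\in\mathcal{K}_n}$ must be eventually constant at $x_n^*$, whence $\overline{\mathfrak{Q}}_n^k(x_n^k) = \overline{\mathfrak{U}}_n(x_n^*)$ for large $k \in \mathcal{K}_n$.

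The main obstacle is the gap bound~\eqref{eq:induction_sample_ineq}. Post-stabilization one has $\overline{\mathfrak{U}}_n(x_n^*) = \sum_m q_{nm} Q_m^{k+1}(x_n^*)$, so term-by-term comparison with $\overline{Q}_n(x_n^*) = \sum_m q_{nm} Q_m(x_n^*)$ reduces the task to controlling $\overline{Q}_m(x_m^\star) - \overline{\mathfrak{U}}_m(x_m^\star)$, where $(x_m^\star, y_m^\star, x_n^*)$ attains the minimum defining $Q_m^{k+1}(x_n^*)$. The induction hypothesis controls this gap only at snap points for $m$, so the crucial bridging observation is the following: whenever the forward pass visits $n$ with state $x_n^*$ and subsequently selects $m$ as next node---both of which happen infinitely often under the Borel-Cantelli realization assumed at the outset---the computed minimizer $u_m^*$ coincides with $x_m^\star$ after stabilization, and step~(1b) guarantees that $u_m^*$ lies within $\delta$ of some snap point $x_m^j$. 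Using non-negativity from Lemma~\ref{lemma:ctg_desig} and the fact that $\overline{Q}_m - \overline{\mathfrak{U}}_m$ is $(\widetilde{L}_m + \rho_m) \leq (\widetilde{L} + \rho)$-Lipschitz, shifting from $x_m^\star$ to $x_m^j$ adds at most $(\widetilde{L} + \rho)\delta$ to the induction bound $(\widetilde{L} + \rho)\delta(T - t(m))$ valid at $x_m^j$; since $t(m) = t(n) + 1$, taking the $q_{nm}$-weighted average yields the advertised $(\widetilde{L} + \rho)\delta(T - t(n))$, with the lower bound on the gap following directly from Lemma~\ref{lemma:ctg_desig}.
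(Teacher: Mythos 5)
Your proof is correct and follows essentially the same route as the paper's: backward induction on the tree, finite stabilization from the finitely many $\delta$-separated snap points in a compact state set together with stabilized successors, tightness of the cut at the limit point to get $\overline{\mathfrak{U}}_n(x_n^*)=\overline{U}_n(x_n^*)$, and the per-stage $(\widetilde{L}+\rho)\delta$ bridging via the $\delta$-snap and Lipschitz continuity. Your treatment of the ``crucial step'' (conditioning on infinitely many forward visits to $m$ with incoming state $x_n^*$, so the induction bound applies at the snap point $u_m^*$ lands on) is a slightly more careful phrasing of the paper's assertion that all incoming states to $m$ eventually equal $x_n^*$, but it is not a different argument.
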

\begin{proof}
	We start proving the finite convergence of $\Qfrak_n^k$ to $\Ufrak_n$,
	and the limit in~\eqref{eq:induction_sample_hypo} by backward induction on the tree.
	In the last stage this result is trivial since both 
	functions~$\overline{\mathfrak{Q}}_n^k$ and~$\overline{Q}_n$ are
	identically zero.

	Let~$n$ be a node such that the statement~\eqref{eq:induction_sample_hypo} holds for
	every successor node $m \in \mathcal{S}(n)$.
	Recall that the updating rule of the reverse norm
	cut has the form:
	\begin{equation*}
	\overline{\mathfrak{Q}}_n^{k+1}(x_{n}) = \max\left\{\overline{\mathfrak{Q}}_n^{k}(x_{n}),\ \overline{Q}_n^{k+1}(x_{n}^k) - \rho_n \cdot \|x_{n} - x_{n}^k\| \right\},
	\end{equation*}
	where the expected cost-to-go approximation~$\overline{Q}_n^{k+1}$
	is the weighted average of the cost-to-go approximations~$Q_m^k$
	over the successor nodes~$m\in\mathcal{S}(n)$.
	By the induction hypothesis,
	after a finite number of iterations we obtain
	\begin{equation}\label{eq:opt_state}
	\begin{array}{rl}
	Q_m^k(x_{n}) = \underset{x_m,y_m,z_m}{\min} & f_m(x_m,y_m) + \overline{\mathfrak{U}}_m(x_m) \\
	\text{s.t.} & (x_m,y_m,z_m) \in X_m, \\
	& z_n = x_{n}.
	\end{array}
	\end{equation}	
	In other words, both the cost-to-go~$Q_m^k$ and 
	the expected cost-to-go~$\overline{Q}_n^k$ approximations 
	stabilize after a finite number
	of iterations, so denote by~$U_m$ and~$\overline{U}_n$ the 
	corresponding limits, respectively.
	Since the number of different incoming states~$x_n^k$ is also finite,
	this implies that the number of different possible reverse norm 
	cuts to update~$\overline{\mathfrak{Q}}_n^k$ is also finite.
	Then,~$\overline{\mathfrak{Q}}_n^k$ converges to a
	function~$\overline{\mathfrak{U}}_n$ in a finite number of iterations.

	Now, let's prove inequality~\eqref{eq:induction_sample_ineq}
	by backward induction on the tree.
	It is trivial at the leaf nodes, so suppose
	inequality~\eqref{eq:induction_sample_ineq} holds for
	every successor node~$m \in \mathcal{S}(n)$.
	Since there is a finite number of different
	possible policy states at the node~$n$,
	the sequence~$\{x_n^k\}_{k\in\mathcal{K}_n}$ converges to~$x_n^*$ in a finite number of iterations,
	which means that the reverse norm 
	cut~$\overline{U}_n(x_n^*) - \rho_n \cdot \|x_n - x_n^*\|$
	is also considered in the expected cost-to-go 
	limit~$\overline{\mathfrak{U}}_n$.
	In particular, we have the following inequalities:
	\[
	\overline{U}_n(x_n^*) - \rho_n \cdot \|x_n - x_n^*\| 
	\leq \overline{\mathfrak{U}}_n(x_n) \leq
	\overline{U}_n(x_n),
	\]
	where the last inequality results from 
	Lemma~\ref{lemma:ctg_desig}.
	But the expected cost-to-go approximations~$\overline{\mathfrak{U}}_n$
	and~$\overline{U}_n$ are equal at~$x_n^*$, 
	so we obtain the following equation for the
	difference between~$\overline{\mathfrak{U}}_n$
	and the true cost-to-go function~$\overline{Q}_n$:	
	\begin{align} \nonumber
	\overline{Q}_n(x_{n}^*) - \overline{\mathfrak{U}}_n(x_{n}^*) 
	& = \overline{Q}_n(x_{n}^*) - \overline{U}_n(x_n^*) \\ 
	\label{eq:sampled_equality}
	& = \sum_{m \in \mathcal{S}(n)} q_{nm}\cdot \left[Q_m(x_n^*) - U_m(x_n^*)\right].
	\end{align}	
	Now, we have the crucial part of the argument.
	Because the expected cost-to-go 
	approximations of all nodes stabilize, every incoming state
	of any successor node~$m\in\mathcal{S}(n)$ 
	equals~$x_n^*$ after a large number of iterations.
	Then, the optimal solution of node~$m$ with input 
	state~$x_n^*$ is equal to~$u_m$, which is less 
	than~$\delta$ away from the final state~$x_m^*$ of node~$m$,
	by the design of the Sampled-SLDP algorithm.
	Then, we obtain the following inequalities:
	\begin{align}\nonumber
	Q_m(x_{n}^*) - U_m(x_{n}^*) & \leq 
	\overline{Q}_m(u_m) - \overline{\mathfrak{U}}_m(u_m) \\ \nonumber
	& \leq \widetilde{L} \cdot \| u_m - x_m^* \| + \overline{Q}_m(x_m^*) - \overline{\mathfrak{U}}_m(u_m) \\ \nonumber
	& \leq \overline{Q}_m(x_m^*) - \overline{\mathfrak{U}}_m(x_m^*) + (\widetilde{L}+\rho)\cdot \|u_m - x_m^*\|
	\end{align}
	where the first inequality results from~$u_m$ being 
	the optimal policy's state of node~$m$ with input
	state~$x_n^*$,
	and the following ones
	from the Lipschitz property of~$\overline{Q}_m$
	and~$\overline{\mathfrak{U}}_m$, respectively.
	By our induction hypothesis,
	\[
		\Qbar_m(x_m^*) - \Ufrak_m(x_m^*) \leq (\widetilde{L} + \rho) \cdot \delta \cdot (T-t(m)),
	\]
	and since $t(m) = t(n) + 1$ we get
	\begin{equation}
		\label{eq:sampled_bound}
		Q_m(x_{n}^*) - U_m(x_{n}^*)
		\leq (\widetilde{L}+\rho) \cdot \delta \cdot (T-t(n)).
	\end{equation}
	because~$u_m$ and~$x_m^*$ are at most~$\delta$ far away from each other.
	So, the upper bound~\eqref{eq:sampled_bound} together with equation~\eqref{eq:sampled_equality} concludes the
	induction step.
\end{proof}

\begin{thm}
	With probability~$1$,
	the sequence of lower bounds~$\{Q_1^k\}_{k\in\mathbb{N}}$
	generated by Algorithm~\ref{alg:sample_ald}
	converges in a finite number of iterations
	to an $\eps$-approximation of the true optimal value $Q_1$,
	where $\eps = (\widetilde{L} + \rho)\cdot\delta\cdot(T-1)$,
	and every cluster point of the sequence of feasible 
	policies~$\{(x^k,y^k,z^k)\}_{k\in\mathbb{N}}$ generated by the forward
	step of Algorithm~\ref{alg:sample_ald} is an $\eps$-optimal policy.
\end{thm}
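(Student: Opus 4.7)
The plan is to mimic the proof of Theorem~\ref{thm:cvg_sldp_ft}, using Lemma~\ref{lemma:finite_policy} as the sampled analogue of Lemma~\ref{lemma:cluster_policy}. The key observation is that Lemma~\ref{lemma:finite_policy} supplies not exact convergence of $\Qfrak_n^k(x_n^k)$ to $\Qbar_n(x_n^*)$ but an $\eps$-approximation which, moreover, stabilises after finitely many iterations, which is exactly what the theorem requires. The two claims to establish are then (a) the finite-step $\eps$-approximation of $Q_1$ by $\{Q_1^k\}_{k\in\mathbb{N}}$, and (b) the $\eps$-optimality of every cluster policy.

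For (a), by feasibility of $(x_1^k,y_1^k,z_1^k)$ for the true root problem and optimality for the approximate one, I obtain
\[
  0 \leq Q_1 - Q_1^k \leq \Qbar_1(x_1^k) - \Qfrak_1^k(x_1^k),
\]
exactly as in the full scenario case. Since the root is visited at every iteration and $\proj_x X_1$ is compact, I extract a convergent subsequence $\{x_1^k\}_{k \in \mathcal{K}_1}$ with limit $x_1^*$. Lemma~\ref{lemma:finite_policy} then gives $\Qfrak_1^k \equiv \Ufrak_1$ eventually, together with $\Qbar_1(x_1^*) - \Ufrak_1(x_1^*) \leq (\widetilde{L} + \rho)\,\delta\,(T-1) = \eps$. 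Continuity of $\Qbar_1$ and monotonicity of $\{Q_1^k\}_{k\in\mathbb{N}}$ upgrade this to $0 \leq Q_1 - Q_1^k \leq \eps$ for all sufficiently large~$k$.

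For (b), I would proceed by forward induction on the scenario tree, mirroring Theorem~\ref{thm:cvg_sldp_ft}. At a node $n$ whose parent already carries an $\eps$-optimal cluster policy, the chain
\[
  f_n(x_n^k,y_n^k) + \Qfrak_n^k(x_n^k) = Q_n^k(x_{a(n)}^k) \leq Q_n(x_{a(n)}^k) \leq f_n(x_n^k,y_n^k) + \Qbar_n(x_n^k),
\]
combined with inequality~\eqref{eq:induction_sample_ineq} from Lemma~\ref{lemma:finite_policy}, produces a limit gap of at most $(\widetilde{L} + \rho)\,\delta\,(T - t(n)) \leq \eps$ once one passes to a convergent subsequence with indices in $\mathcal{K}_n \subseteq \mathcal{J}_n$; recall that $\#\mathcal{J}_n = +\infty$ with probability one by the Borel--Cantelli lemma.

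The main subtlety, absent from the full scenario argument, is that the subsequence indices realising a given cluster state can differ across nodes, so the induction cannot simply inherit the parent's $\mathcal{K}_n$. This is neutralised by the finite-convergence part of Lemma~\ref{lemma:finite_policy}: once every $\Qfrak_n^k$ has stabilised to $\Ufrak_n$ and the forward step sends each $x_n^k$ into a finite set of trial points, any cluster policy is in fact attained along an entire tail of iterations, so the inductive step can be carried out along a common subsequence for all nodes simultaneously. With this in hand, both the finite-step $\eps$-approximation of $Q_1$ and the $\eps$-optimality of cluster policies follow by exactly the same bookkeeping as in the full scenario theorem.
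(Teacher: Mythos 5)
Your argument is correct and takes essentially the same route as the paper, whose own proof is a one-line deferral to Lemma~\ref{lemma:finite_policy} combined with the reasoning of Theorem~\ref{thm:cvg_sldp_ft} --- precisely the two ingredients you identify and flesh out, including the genuine subtlety about aligning subsequences across nodes. One nitpick: a cluster value of a sequence that eventually takes finitely many values is attained infinitely often, not necessarily along an entire tail, but this does not harm your argument since a common subsequence for all nodes still exists.
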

\begin{proof}
	This is a straightforward result of Lemma~\ref{lemma:finite_policy},
	using the same reasoning as in Theorem~\ref{thm:cvg_sldp_ft}.
\end{proof}

\section{Examples}\label{sec:examples}

In this section, we will present two applications of the SLDP algorithm
for stochastic optimization.
The first is a simple example of a 1-dimensional dynamics with discrete control.
Due to its symmetry and relative simplicity,
it is possible to evaluate the cost-to-go functions,
so that we can understand the behavior of the algorithm in its different forms.
The second one has been extracted from~\cite{CaroeSchultz97} and~\cite{Ahmed2SSIP},
and is a 2-stage problem, for which enumeration can be performed
in order to also evaluate the optimal solution and cost-to-go function.

\subsection{Implementation details}

The non-convex cuts used in SLDP are represented as inequalities of the form
\[
  \alpha \geq v + \lambda^\top(x - x^k) - \rho \cdot \|x - x^k\|,
\]
where $\lambda = 0$ for the reverse-norm cuts,
but is needed for the augmented Lagrangian cuts.
To incorporate them in the stage problems,
this requires choosing a norm,
and a MIP formulation of this constraint.
For the experiments below, we have used the $L^1$ norm
\begin{equation}
  \label{eq:1norm}
  \|x - x^k\|_1 = \sum_j |x_j - x_j^k|,
\end{equation}
and each term in~\eqref{eq:1norm} is given by the sum $(u_j^+ + u_j^-)$
from the following system:
\begin{equation} \label{eq:mip_form_abs}
\begin{aligned}
  u_j^+ - u_j^- & = x_j - x_j^k \\
  0 \leq u_j^+ & \leq M_j z_j \\
  0 \leq u_j^- & \leq M_j (1 - z_j) \\
  z_j & \in \{0,1\} \\
\end{aligned}
\end{equation}
The constants $M_j$ are large enough so that $\proj_{x_j}(X)$
has diameter less than $M_j$,
which is ensured by the compactness assumption of $\proj_x(X)$.

Observe that this formulation includes a binary variable (and two continuous variables),
per dimension of $x$, for each new non-convex cut we introduce.
This makes each iteration of the SLDP algorithm much more expensive than previous ones.

\medskip

One practical implementation of the SLDP method uses augmented Lagrangian cuts,
and increases $\rho$ progressively.
Since by construction the augmented Lagrangian cuts are valid,
if $\rho$ is not large enough then the cuts might not be tight,
but they might fill faster the non-convex regions of the cost-to-go function.
Also, in analogy to Strengthened Benders cuts,
it is possible to fix both the Lagrange multiplier and the augmenting term,
and solve the resulting augmented Lagrangian relaxation.
This again yields a valid cut,
which we call \emph{strengthened augmented Benders cut}.

\medskip

All results below were obtained using
Julia-0.6.3~\cite{julia}
and the Julia packages \texttt{SDDP.jl}~\cite{SDDP.jl}
and \texttt{SDDiP.jl}~\cite{SDDiP.jl},
besides our own Julia implementation for both
Lipschitz and strengthened augmented Benders cuts~\cite{sldp.jl},
extending \texttt{SDDP.jl}.
The computations were performed on an
Intel(R) Xeon(R) CPU E5-2603 CPU.

\subsection{A 1-dimensional control problem}

We consider the following multistage control problem:
\[
\begin{array}{rl}
  \min  & \mathbb{E}\left[\sum\limits_{t=1}^T \beta^{t-1} \abs{x_t} \right] \\
\text{s.t.} & \quad x_t = x_{t-1} + c_t + \xi_t \\
            & \quad c_t \in \{\pm 1\}
\end{array}
\]
The state variable $x_t$ is 1-dimensional, as the discrete control $c_t = \pm 1$,
and the uncertainty $\xi_t$.
The objective is to minimize the expected displacement away from zero,
subject to a decay factor $\beta$,
over the planning horizon $T$.
We fix $T = 8$, $\beta = 0.9$, $x_0 = 2$,
and at each stage $t$ we consider 10 independent scenarios symmetrically sampled around 0.


We will compare the performance and the policy generated by several methods:
a convex approximation using Strengthened Benders cuts (shortened as SB),
the original SLDP algorithm using reverse-norm cuts (SLDP tents),
a modified SLDP algorithm using ALD cuts with increasing augmentation $\rho$ (SLDP ALD),
and the SDDiP algorithm~\cite{SDDiP2016},
using two discretization steps: $0.1$ and~$0.01$.
The resulting discretized problems for SDDiP don't have complete continuous recourse,
since the state cannot absove the noise below the discretization level,
and we only have a discrete control,
so we also add a slack variable and penalize it in the objective function.
The original problem, with continuous state,
doesn't need adjustments.

We present in Table~\ref{tab:control}
the lower bounds,
the estimated upper bounds using policy simulations
and the computation times
after 100 iterations for each method.
\begin{table}[hb]
  \begin{tabular}{r|lllll}
             & SB    & SLDP tents & SLDP ALD & SDDiP 0.1 & SDDiP 0.01 \\ \hline
    LB       & 1.167 &   3.073    & 3.085    &  3.420    & 2.370 \\
    UB       & 3.453 &   3.320    & 3.313    &  3.823    & 3.490 \\
    time (s) & 12    &   558      & 605      &  1994     & 3317  \\
  \end{tabular}
  \caption{Results for an 8-stage non-convex problem}
  \label{tab:control}
\end{table}

The convex approximations stall at a very low lower bound,
while the non-convex methods all have better estimates,
but they also need significantly more computation time.
The SLDP approximations have a very similar performance ---
the ALD method requiring slightly more time.
SDDiP has a relatively good performance with step $0.1$,
but not with $0.01$.
Observe that the higher lower bound for SDDiP with step $0.1$
also comes with a higher upper bound,
which is due to the addition of the penalization term and a loose state discretization.
When the discretization step is the smaller $0.01$,
the upper bounds of the simulation agree more closely with the other cases,
but we spend $66\%$ more in computation time,
and the lower bounds are much further away.

As we can see in figure~\ref{fig:Qbar_control},
the future cost functions are nonconvex at all time stages,
essentially driven by the discontinuous control $c_t$:
the immediate cost is $\min\{\abs{u-1},\abs{u+1}\}$,
where $u = x_{t-1} + \xi_t$.
\begin{figure}[!hbtp]
  \centering
  \includegraphics[width=0.9\textwidth]{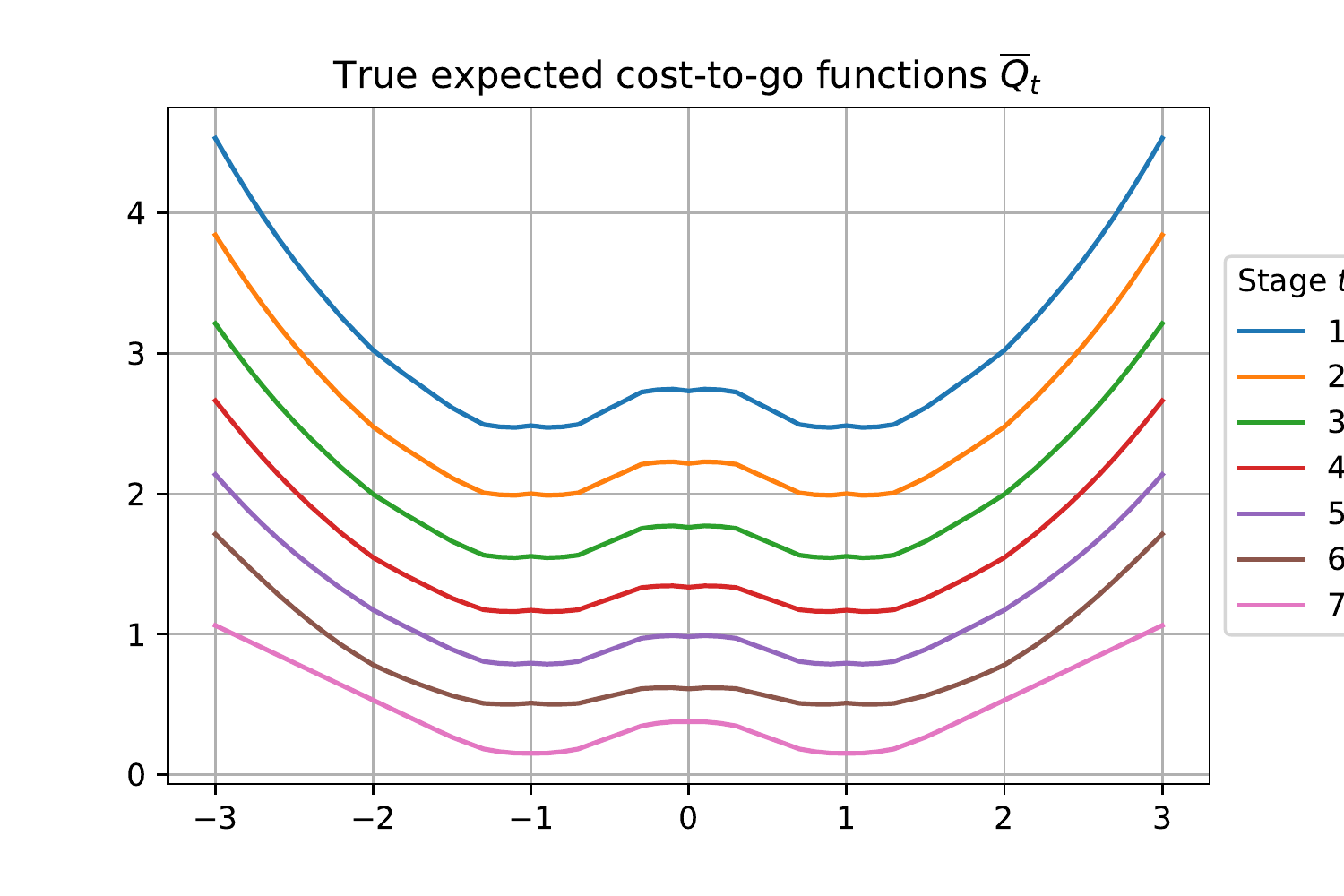}
  \caption{Expected cost-to-go functions}
  \label{fig:Qbar_control}
\end{figure}
However, the future cost functions built by the convex Strengthened Benders cuts
can't pierce into the nonconvexities, and become flat over $[-1,1]$,
as depicted in figure~\ref{fig:QLP_control}.
This explains why the convex approximations perform so poorly in this case.
\begin{figure}[!hbtp]
  \centering
  \includegraphics[width=0.9\textwidth]{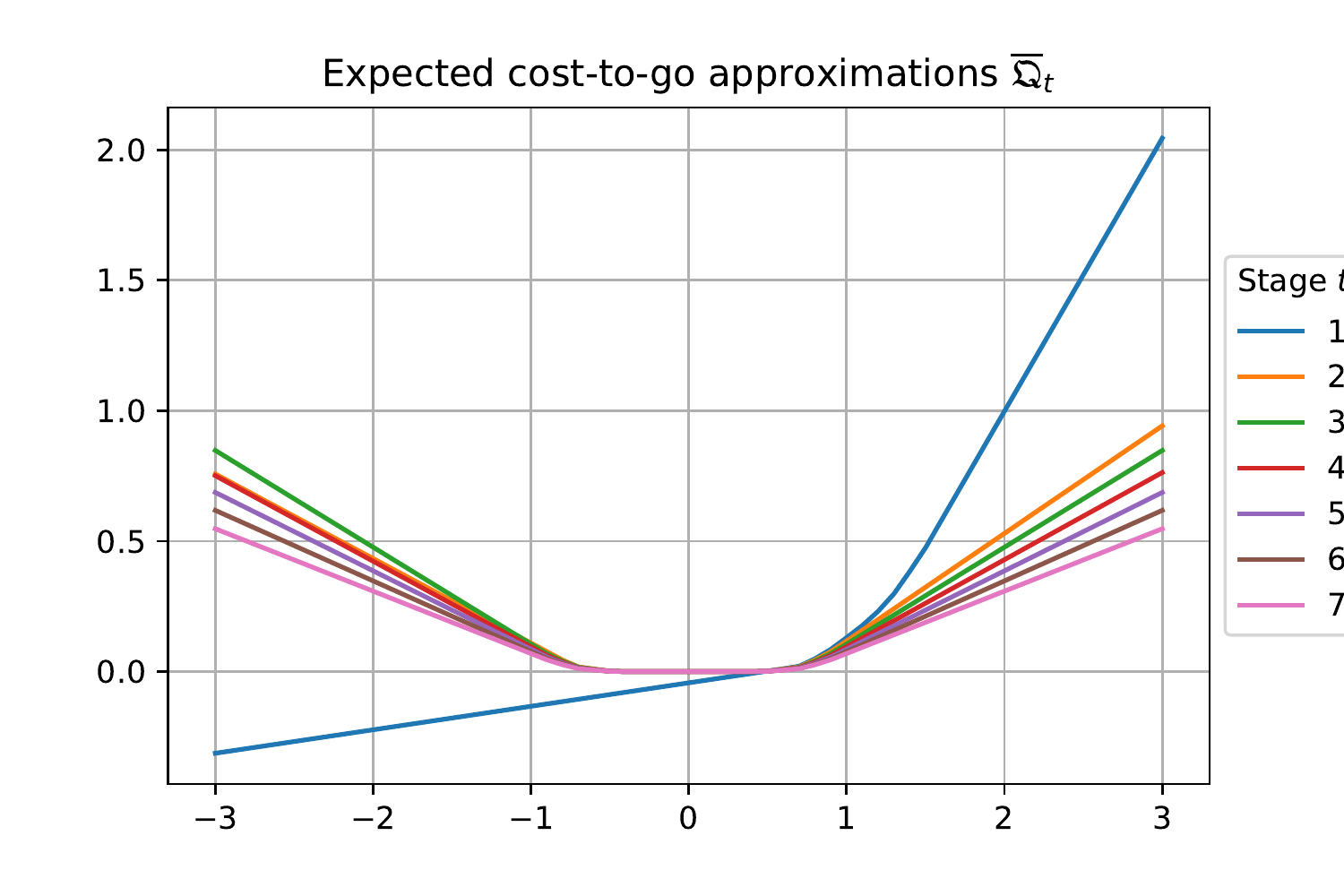}
  \caption{Convex approximations of the expected cost-to-go functions, built with Strengthened Benders cuts}
  \label{fig:QLP_control}
\end{figure}

The Lipschitz cuts, on the other hand,
are indeed able to yield a better approximation of the problem,
and approximate the expected cost-to-go functions inside their nonconvexities.
The same happens with the value function obtained by SDDiP.
In figure~\ref{fig:Qtilde_control},
we show a comparison of the expected cost-to-go functions thus constructed,
using reverse-norm cuts, augmented Lagrangian cuts, SDDiP,
and the actual future cost function.
\begin{figure}[!hbtp]
  \centering
  \includegraphics[width=\textwidth]{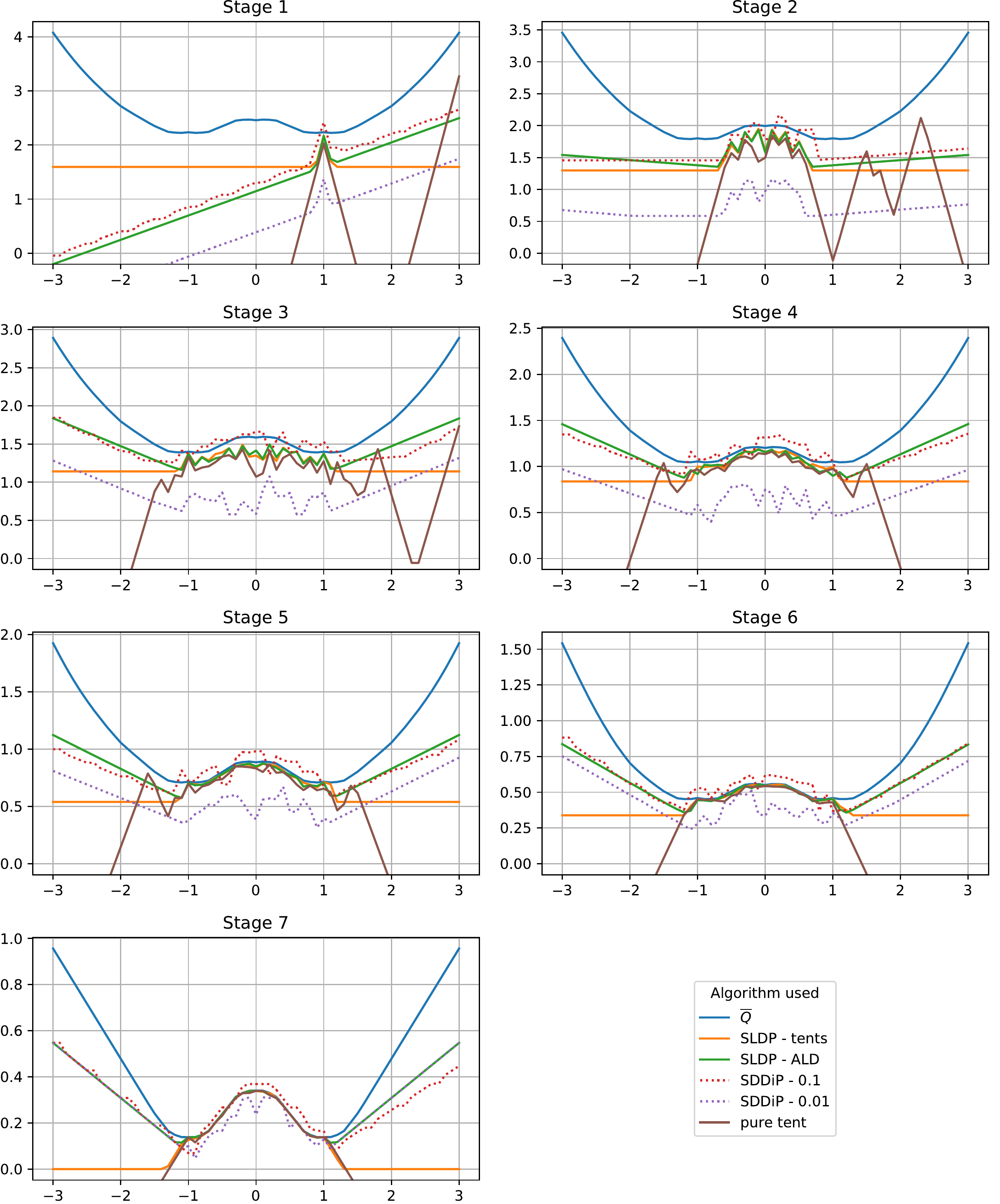}
  \caption{Non-convex approximations of the expected cost-to-go functions}
  \label{fig:Qtilde_control}
\end{figure}

One particular feature of this example is that the future cost functions are continuous
in the original variables $x_t$,
but since the control is $c_t \in \{-1,1\}$,
the stage problems lack the continuous recourse property.
For this reason, when one takes the SDDiP discretization of the state variable,
one must also include a slack term to the state dynamics.
This increases the costs overall, and explains why the value functions estimated with
the ``coarse'' discretization with $\eps = 0.1$ are higher than
the true expected cost-to-go functions $\Qbar_t$.
The lower bounds $\Qfrak_t$ obtained with the ``fine'' discretization with $\eps = 0.01$,
on the other hand, ``detach'' much faster from the respective $\Qbar_t$
as we move back in the stages of the tree.

Finally, we compare in figure~\ref{fig:Timing_control}
the evolution of the lower bounds, both as iterations and time increase.
\begin{figure}[!hbtp]
  \centering
  \includegraphics[width=\textwidth]{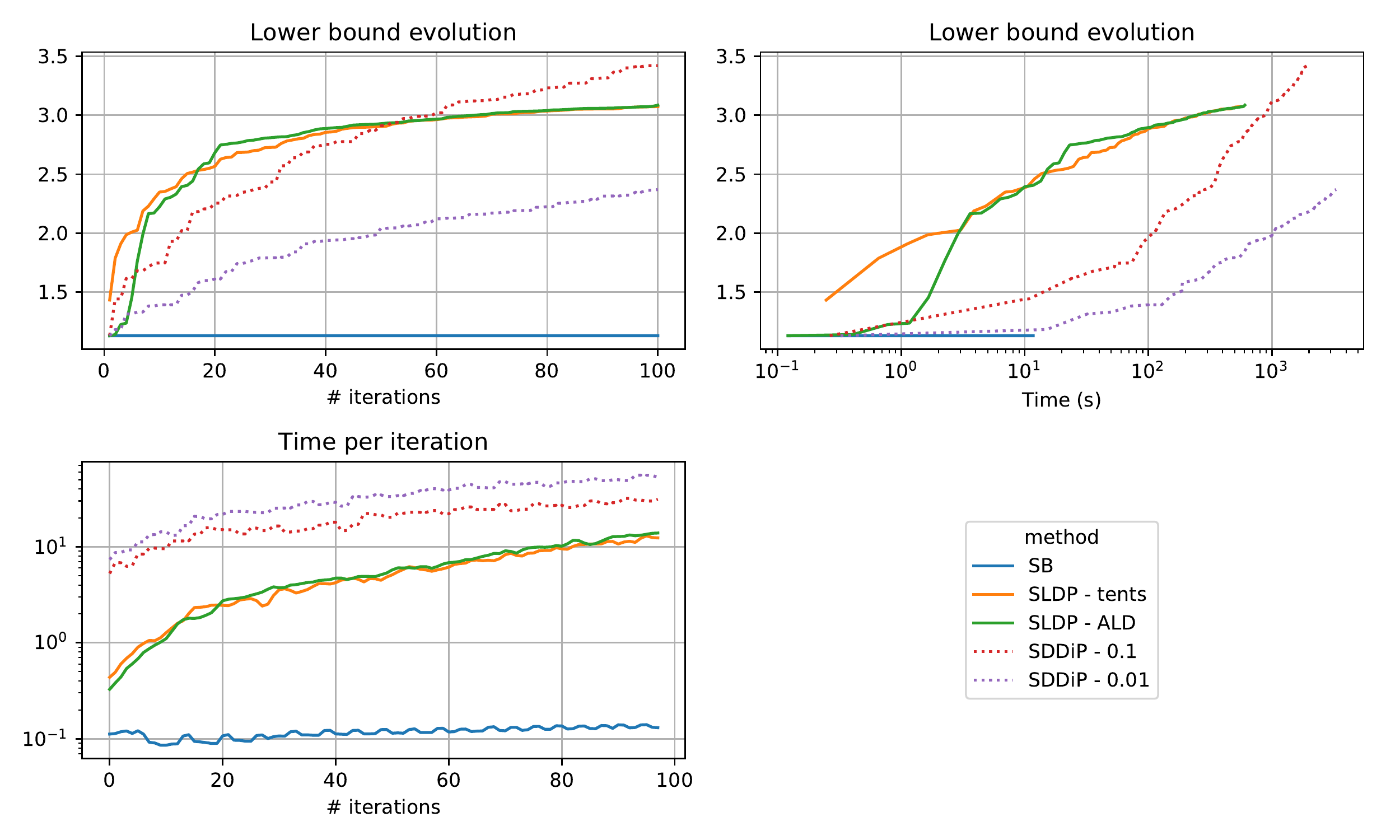}
  \caption{Several timing comparisons: lower bound evolution and time per iteration}
  \label{fig:Timing_control}
\end{figure}
There, we see that both methods for SLDP were essentially equivalent,
maybe except at the beginning, where the ALD's $\rho$ was probably
too small to yield good enough cuts.
However, as iterations progressed, and $\rho$ was large enough,
the algorithm quickly reached a comparable lower bound.
We also include a curve for the time taken per iteration,
to highlight the rapid increase in time for the SLDP algorithm,
even in a 1-dimensional problem.
This same phenomenon happens for SDDiP, on a much smaller scale,
but it already starts out with a significantly larger iteration time.

\subsection{A 2-dimensional example}

We also study another example,
taken from~\cite{CaroeSchultz97} and further adapted in~\cite{Ahmed2SSIP}.
This is a 2-stage problem, with cost-to-go function
\[
  Q(x_1,x_2,\omega_1,\omega_2) =
  \begin{array}[t]{rl}
    \min & -16y_1 -19y_2 -23y_3 -28y_4 \\
    \text{s.t.} & 2y_1 + 3y_2 + 4y_3 + 5y_4 \leq \omega_1 - x_1 \\
                & 6y_1 +  y_2 + 3y_3 + 2y_4 \leq \omega_2 - x_2 \\
                & y_i \in \{0,1\}.
\end{array}
\]
The random variable $\omega$ lies in $[5,15]^2$,
and is approximated using $N^2$ points on the square,
where $N = 2,3,6$.

In~\cite{CaroeSchultz97}, the authors consider the optimization problem
with discrete first-stage decisions:
\begin{equation} \label{caroeschultz}
\begin{array}{rl}
  \min  & -1.5 x_1 - 4 x_2 + \mathbb{E}[Q(x_1,x_2,\omega_1,\omega_2)] \\
  \text{s.t.} & x_1, x_2 \in [0,5] \cap \mathbb{Z}.
\end{array}
\end{equation}
The same value function for the second stage
can be used for a \emph{continuous} first-stage problem,
as treated by~\cite{Ahmed2SSIP},
where one drops the constraint that the first-stage variables belong to $\mathbb{Z}$.

In this case, the cost-to-go function is discontinuous in $x$,
so for both the SLDP and SDDiP we would need to add slack variables
and their corresponding penalization in the objective function.
Therefore, for this experiment we used only the convex approximations
and the ALD cuts, which are valid despite the discontinuities of the value function.
The convex approximations were calculated using 100 iterations
(but stalled much before that),
while the ALD approximations were carried for 200 iterations.
This is more than the number of possible values for the state variable $x$,
in the discrete case.

We give in tables~\ref{tab:caroe_int} and~\ref{tab:caroe_cont},
respectively for the discrete and continuous first stage variables,
the lower bounds and computation time required for each method.
We also include the true objective for each value of $N$.
In all cases, the convex cuts using Strenghtened Benders
are not able to close the gap,
so we report the remaining gap for the SLDP-ALD as the ratio
\[
  \frac{\text{Objective} - \text{ALD LB}}{\text{Objective} - \text{SB LB}}
\]
of the remaining gap.
In the discrete case of table~\ref{tab:caroe_int},
we obtain exact results for $N = 2$ and $N = 3$,
and a significant reduction for the case $N = 6$.
The fact that it does not converge is due to the need of having even larger values for $\rho$
to get tight cuts for every node in the second stage.
The continuous case is harder, and we have gaps now at $N = 3$ as well.
Besides the difficulties of achieving tight cuts as in the discrete case,
the algorithm also needs to explore several points in the neighborhood of the optimal solution.

It is remarkable that the times required by the Lipschitz cuts is much smaller
in the discrete setting than in the continuous setting,
\emph{contrary} to what happens for the convex case,
which solves a harder problem in the first stage and therefore takes slightly more time.
This is probably explained by the SLDP algorithm constructing cuts at the same points,
and therefore the resulting stage problems don't become much more difficult as times passes,
as opposed to the continuous case, where the nodes for each cut are probably different.
Also note that all those times, and especially the ALD times,
are much larger than the ones reported in~\cite{Ahmed2SSIP}.
Besides a different computational setting,
we don't explore the fact that the technology matrix is deterministic.

{
\def\z{\hphantom{0}}
\def\m{\hphantom{-}}
\def\c#1{\multicolumn{1}{c}{#1}}
\begin{table}[hb]
  \centering
  \begin{tabular}{r|lll}
          N        &  \c2    &  \c3    &    \c6    \\ \hline
    Objective      & -57.000 & -59.333 &   -61.222 \\ \hline
    SB LB          & -58.096 & -61.961 &   -65.468 \\
    ALD LB         & -57.000 & -59.333 &   -61.274 \\
    remaining (\%) & \m\z  0 & \m\z  0 & \m\z 1.27 \\ \hline
    SB time  (s)   & \z 0.98 & \z 1.60 & \z\z 5.57 \\
    ALD time (s)   &   19.4  &   77.8  &    125.0  \\
    ALD/SB time    &   19.8  &   48.6  &  \z 22.4  \\
  \end{tabular}
  \caption{Results for discrete first stage}
  \label{tab:caroe_int}
\end{table}

\begin{table}[hb]
  \centering
  \begin{tabular}{r|lll}
          N        &   \c2     &    \c3    &    \c6    \\ \hline
    Objective      &  -57.000  &   -59.333 &   -61.222 \\ \hline
    SB LB          &  -58.095  &   -61.961 &   -65.541 \\
    ALD LB         &  -57.000  &   -59.495 &   -61.579 \\
    remaining (\%) & \m\z   0  & \m\z 6.27 & \m\z 8.31 \\ \hline
    SB time (s)    & \z\z 0.63 & \z\z 1.30 & \z\z 5.22 \\
    ALD time (s)   &    260    &    532    &    758    \\
    ALD/SB time    &    413    &    409    &    145    \\
  \end{tabular}
  \caption{Results for continuous first stage}
  \label{tab:caroe_cont}
\end{table}
}

\FloatBarrier


\section{Conclusion}

In this paper, we proposed a new algorithm for solving
stochastic multistage MILP programming problems,
called Stochastic Lipschitz Dynamic Programming (SLDP).
Its major contribution is the inclusion of \emph{nonlinear cuts}
to iteratively underapproximate
\emph{nonconvex} Lipschitz cost-to-go functions.
We explored two such families of cuts:
(a) the ones induced by reverse-norm penalizations;
and (b) augmented Lagrangian cuts,
built from norm-augmented Lagrangian duality.

Assuming the Compact State Complete Continuous Recourse conditions,
we proved convergence of the algorithm in the full scenario setting.
In the sampled case, we provided an approximation method to reach
$\eps$-optimal policies in finite time.
Besides asymptotic convergence in the general Stochastic Multistage Lipschitz case,
it would be interesting to prove a finite convergence result for
Stochastic MILPs in the sampled case by further exploring the structure
of the value functions in each stage.

Our experiments suggest that, at least for small-dimensional problems,
the performance of the SLDP algorithm is reasonable.

\section*{Acknowledgements}

This research has been supported in part by
the National Science Foundation grant 1633196,
the Office of Naval Research grant N00014-18-1-2075,
the COPPETec project IM-21780. 

The second author would like to express his gratitude to the Brazilian
Independent System Operator (ONS) for its support for this research.

This research project was concluded while the third author
was visiting Georgia Tech during a sabbatical leave from UFRJ.
He would like to warmly thank the hospitality and the
excelent environment of the ISyE institute.

\bibliographystyle{apalike}
\bibliography{refs.bib}

\end{document}